\theoremstyle{plain}
\newtheorem{thm}{Theorem}
\newtheorem{cor}[thm]{Corollary}
\newtheorem{lemma}[thm]{Lemma}
\newtheorem{prop}[thm]{Proposition}
\theoremstyle{remark}
\newtheorem{defn}[thm]{Definition}
\newtheorem*{rmk}{\textbf{Remark}}
\numberwithin{equation}{section}
\numberwithin{thm}{section}
\newcommand{\norm}[1]{\left|\!\left|{#1}\right|\!\right|}
\newcommand{\dist}{\mathrm{dist}}
\newcommand{\Exp}{\mathrm{exp}}
\newcommand\E{\mathbb E}
\newcommand\HH{\mathcal H}
\newcommand\Inj{\mathrm{Inj}\,}
\newcommand\Loop{\mathcal L}
\newcommand{\Lip}{\mathrm{Lip}}
\newcommand\M{\mathbb M}
\newcommand\Me{\mathcal M}
\newcommand{\Planck}{\mathrm{Planck}}
\newcommand\R{\mathbb R}
\newcommand{\s}{\mathbb S}
\newcommand{\Span}{\mathrm{span}\,}
\newcommand\Id{\text{Id}}
\newcommand{\Var}{\mathrm{Var}}
\newcommand{\ve}{\varepsilon}
\newcommand{\Vol}{\mathrm{Vol}}
\title[Equidistribution of random waves on small balls]{Equidistribution of random waves on small balls}
\author{Xiaolong Han}
\address{Department of Mathematics, California State University, Northridge, CA 91330, USA}
\email{xiaolong.han@csun.edu}
\author{Melissa Tacy}
\address{Department of Mathematics and Statistics, University of Otago, Dunedin Otago 9016, New Zealand }
\email{mtacy@maths.otago.ac.nz}
\subjclass[2010]{58J50, 58J65, 35P20, 60B10}
\keywords{Eigenfunction equidistribution at small scales, random eigenfunctions, random waves, spectral projections}
\date{}
\begin{document}
\maketitle

\begin{abstract}
In this paper, we investigate the small scale equidistribution properties of randomised sums of Laplacian eigenfunctions (i.e. random waves) on a compact manifold. We prove small scale expectation and variance results for random waves on all compact manifolds. Here, ``small scale'' refers to balls of radius $r(\lambda)\to 0$ such that $r/r_{\text{Planck}}\to\infty$, where $r_{\text{Planck}}$ is the Planck scale. For balls at a larger scale (although still $r(\lambda)\to 0$) we also obtain estimates showing that the probability that a random wave fails to equidistribute decays exponentially with the eigenvalue.  
\end{abstract}

\section{Introduction}
Studying the behaviour of random combinations of either plane waves or eigenfunctions has lately proved to be an exciting research area. It is conjectured, by Berry \cite{B} in the 1970s, that eigenfunctions of chaotic systems such as planar domains with chaotic billiard flow behave like random waves. That is, their behaviour is modelled by functions of the form
\begin{equation}\label{eq:randomwaveRn}
\sum_{j}a_{j}e^{i\lambda\langle x,\xi_{j}\rangle},
\end{equation}
where the $\{\xi_{j}\}$ are chosen as a set of equidistributed (at scale $\lambda^{-1}$) directions on the $(n-1)$-$\dim$ unit sphere $\s^{n-1}$ and the coefficients $a_{j}$ are chosen randomly, e.g. independent Gaussian random variables. 

In the setting of manifolds where the underlying geodesic flow displays chaotic properties equidistribution for Laplacian eigenfunctions has been studied by Snirelman \cite{Sn}, Zelditch \cite{Z1}, Colin de Verdi\`ere \cite{CdV}. In particular, on manifolds with ergodic geodesic flow such as negatively curved manifolds, there is a full density sequence of eigenfunctions in any eigenbasis that equidistribute. Recently a number of small scale equidistribution results were established in various settings and at various scales by Han \cite{Ha1, Ha2}, Hezari-Rivi\`ere \cite{HR1, HR2}, Lester-Rudnick \cite{LR}, Chang-Zelditch \cite{CZ}. Notably on manifolds with negative curvature, equidistribution at logarithmic scales (i.e. $r=(\log\lambda)^{-\alpha}$ for some $\alpha>0$) is established in \cite{Ha1} and \cite{HR1}.  See also the application of small scale equidistribution to other eigenfunction problems in Hezari \cite{He1, He2, He3}, Sogge \cite{So1, So2}, Zelditch \cite{Z5}, etc.

As in the Euclidean case it is interesting to study the model case of random behaviour. On a compact manifold $\M$, the natural class of objects that replace plane waves $e^{i\lambda\langle x,\xi_{j}\rangle}$ are eigenfunctions. That is, we consider sums
$$\sum_{\lambda_{j}\in\Lambda}a_{j}e_{j}(x),$$
where $\Lambda\subset[0,\infty)$, the $e_{j}$'s are orthonormal Laplacian eigenfunctions on $\M$ with eigenvalues $\lambda_j^2$, and the coefficients $a_{j}$ are prescribed in a random fashion. 

The obvious first question is how to pick the set $\Lambda$. Notice that, because $\xi_j\in\s^{n-1}$, the plane waves $e^{i\lambda\langle x,\xi_j\rangle}$ are generalised eigenfunctions of the Laplacian with eigenvalues $\lambda^2$. So on compact manfolds, initially it may seem natural to fix an eigenspace $E_{\lambda}$ with eigenvalue $\lambda^2$ and randomise only over the eigenfunctions in $E_\lambda$, as is done in \cite{Ha2} on manifolds including tori and spheres. See also de Courcy-Ireland \cite{CI} on spheres. However, the multiplicity of this eigenvalue may be low. In fact, in chaotic cases such as when $\M$ has negative curvature it is conjectured that the eigenvalues have very low multiplicity. Indeed, on manifolds with generic metric, the eigenvalues are simple due to Uhlenbeck \cite{U}. Therefore, to capture the random behaviour, we allow ourselves to randomise over eigenfunctions whose eigenvalues sit in a spectral window. Such randomisations were introduced in Zelditch \cite{Z3}. That is, we set $\Lambda=[\lambda-W,\lambda]$ for $1\le W\le\lambda$ and consider the functions 
$$u=\sum_{\lambda_{j}\in[\lambda-W,\lambda]}{a_{j}}e_{j}(x).$$
We point out that the spectral window width $W$ here is allowed to depend on $\lambda$. These randomised functions are commonly referred as ``random waves''. We adopt this terminology and reserve the term ``random eigenfunctions'' for those combinations taken over a single eigenspace. In Zelditch \cite{Z3}, the two special cases of $W=1$ and $W=\lambda$ are called the asymptotically fixed frequency ensembles and the cut-off ensembles, respectively. Both of these ensembles are included in our analysis here.

Having selected a window to randomise over, we must now consider how we pick our random variables $a_j$ in the random wave $u=\sum a_je_j$. We always normalize such that $\E\left(\|u\|^2_{L^2(\M)}\right)=1$. Some common choices of $a_j$ include independent random variables such as Gaussian or Rademacher random variables with proper normalization (\cite{B, CI, Z3}) and uniform probability density on unit spheres (\cite{BL, Ha2, M, Z2, Z4}.) The coefficients $a_{j}$ in the former randomisation procedure are independent so it is straightforward to compute some probabilistic estimates such as the covariance. The latter randomisation process is not independent however in Section \ref{sec:prob} we show, in a rather elementary way, that these key probabilistic estimates are asymptotically the same as the independent case. Where the coefficients $a_{j}$ are chosen so that $a=(a_{1},\dots,a_{d})$ lies on the unit sphere $\s^{d}$ the randomisation procedure admits a dual interpretation, namely
\begin{itemize}
\item we pick coefficients $a_j$ at random from a probability density,
\item we pick an $L^{2}$ normalised function at random in $\Span_{\lambda_{j}\in[\lambda-W,\lambda]}\{e_j\}$. (Note that here $\|u\|^2_{L^2(\M)}=\sum|a_j|^2=1$ for all random waves, whereas one can only require $\E\left(\|u\|^2_{L^2(\M)}\right)=1$ in the case when $a_j$ are chosen as Gaussian independent variables.)
\end{itemize}
In addition, the Levy concentration of measure (Theorem \ref{thm:Levy}) on the unit sphere serves as an important tool to study uniform equidistribution of random waves on the whole manifold (see Theorem \ref{thm:SSEonM}.) Therefore, in this paper, we use the uniform probability density on unit spheres and ask about the expected behaviour of random waves as well as the variance in behaviour. In particular, we focus on small scale behaviour. We want to understand when random waves equidistribute on small balls. 

There are two parts to understanding this equidistribution. The first is to ascertain when 
\begin{equation}\label{expectation-equi}
\E\left(\int_{B(x,r)}|u|^{2}\,d\Vol\right)\to{}\frac{\Vol(B(x,r))}{\Vol(\M)}\quad\text{as }\lambda\to\infty,
\end{equation}
in which $B(x,r)\subset\M$ is a geodesic ball with center $x$ and radius $r$. However, while the expectation value might equidistribute, it is still possible that the probability of non-equidistribution is high. To that end we also determine, for given $r=r(\lambda)\to0$ as $\lambda\to\infty$, whether
\begin{equation}\label{var-equi}
\Var\left(\int_{B(x,r)}|u|^{2}d\Vol\right)=o\left(\Vol(x,r)^{2}\right)\quad\text{as }\lambda\to\infty.
\end{equation}
The variance estimate tells us whether we may expect that a typical sum of eigenfunctions equidistributes at scale $r$ around $x\in\M$. If the scale $r$ in \eqref{expectation-equi} and \eqref{var-equi} is independent of $\lambda$, i.e. equidistribution at fixed scales, the analysis follows from Zelditch \cite{Z2, Z4} and Maple \cite{M} in various random settings.

At small scales $r=r(\lambda)$ such that $r^{-1}=o(\lambda)$ as $\lambda\to\infty$, we are able to obtain sufficient conditions for \eqref{expectation-equi} and \eqref{var-equi}. Notice that these scales are just above the Planck scale
$$r_\Planck:=\lambda^{-1}.$$
Precisely, the scale $r$ satisfies that $r/r_\Planck\to\infty$. See Theorems \ref{thm:sserandom}   and  \ref{thm:sserandomfixed} below.

For a \textit{fixed} $x\in\M$, \eqref{expectation-equi} and \eqref{var-equi} tell us that probability such that the random wave $u$ does not equidistribute on $B(x,r)$ decays as $\lambda\to\infty$. Finally, we consider small scale equidistribution of random waves \textit{uniformly} on the manifold, i.e. uniformity across balls $B(x,r)$ of radius $r$ and for all $x\in\M$. See Theorem \ref{thm:SSEonM} below.

At this point, some details of  our setup  is in order. Let $(\M,g)$ be an $n$-$\dim$ compact, smooth Riemannian manifold without boundary. Denote $\Delta=\Delta_g$ the (positive) Laplace-Beltrami operator. Let $\{e_j\}_{j=0}^\infty$ be an orthonormal basis of eigenfunctions (i.e. eigenbasis) of $\Delta$ with eigenvalues $\lambda_j^2$ (counting multiplicities). That is, $\Delta e_j=\lambda^2_je_j$, where $\lambda_j$ is called the eigenfrequency. Denote $\Inj\M$ the injectivity radius of $\M$. We assume, without loss of generality, that $\Inj\M\ge1$.

We define the probability space for the random waves in a similar fashion as in Zelditch \cite{Z3}. 
\begin{defn}[Random waves]\label{def:probspace}
Let
\begin{equation}\label{HWdef}
\HH_{W}(\lambda)=\Span_{\lambda_{j}\in[\lambda-W,\lambda]}\{e_j\}\quad\text{and}\quad N_{W}(\lambda)=\dim\HH_W(\lambda).
\end{equation}
We assume that the eigenfunctions $e_j$ are real-valued. We define the random wave $u_{\lambda,a}\in\HH_{W}(\lambda)$ as
\begin{equation}\label{eq:ulambda}
u_{\lambda,a}:=\sum_{\lambda_{j}\in[\lambda-W,\lambda]}a_je_j,\quad\text{for }a\in\s^{N_W(\lambda)-1}.
\end{equation}
Here, $\s^{N_W(\lambda)-1}$ is equipped with uniform probability measure $\mu_{N_W(\lambda)}$. That is, $u_{\lambda,a}$ is a sum of eigenfunctions in $\HH_W(\lambda)$ with random coefficient $a\in\s^{N_W(\lambda)-1}$ so $\|u_{\lambda,a}\|_{L^2(\M)}=1$. For brevity, we also write $u_\lambda$ as $u_{\lambda,a}$ with the understanding that $a$ is the random variable.
\end{defn}

\begin{rmk}
One can similarly consider random waves as combinations of complex-valued Laplacian eigenfunctions, in which case the coefficient $a$ in \eqref{eq:ulambda} is chosen randomly from the complex unit sphere. The analysis is similar so we omit details here. 
\end{rmk}

Our first main theorem states that
\begin{thm}\label{thm:sserandom}
Let $1\le W\le\lambda$ and $x\in\M$. 
\begin{enumerate}[(i).]
\item For $r>0$, the expected value with respect to the probability measure $\mu_{N_W(\lambda)}$  
\begin{equation}\label{eq:sserandomEx}
\E\left(\int_{B(x,r)}|u_\lambda|^2\,d\Vol\right)=\frac{\Vol(B(x,r))}{\Vol(\M)}\left[1+O\left(W^{-1}\right)\right],
\end{equation}
where the term $O\left(W^{-1}\right)$ is independent of $x$.
\item For $r^{-1}=o(\lambda)$ as $\lambda\to\infty$, i.e. $r/r_\Planck\to\infty$, we have that the variance with respect to the probability measure $\mu_{N_W(\lambda)}$
\begin{equation}\label{eq:sserandomVar}
\Var\left(\int_{B(x,r)}|u_\lambda|^2\,d\Vol\right)=\Vol(B(x,r))^2\left[o(1)+O\left(W^{-2}\right)\right]\quad\text{as }\lambda\to\infty,
\end{equation}
where the terms $o(1)$ and $O\left(W^{-2}\right)$ are independent of $x$.
\end{enumerate}
In particular, if the spectral window width $W=W(\lambda)\to\infty$ as $\lambda\to\infty$, then  at all scales $r$ such that $r/r_\Planck\to\infty$,
$$\E\left(\int_{B(x,r)}|u_\lambda|^2\,d\Vol\right)=\frac{\Vol(B(x,r))}{\Vol(\M)}+o\left(r^n\right)\quad\text{as }\lambda\to\infty,$$
and
$$\Var\left(\int_{B(x,r)}|u_\lambda|^2\,d\Vol\right)=o\left(r^{2n}\right)\quad\text{as }\lambda\to\infty.$$
\end{thm}

We remark that if the spectral window width $W$ is independent of $\lambda$, e.g. the case of the asymptotically fixed frequency ensemble when $W=1$, then Theorem \ref{thm:sserandom} does \textit{not} imply small scale equidistribution results of the random waves $u_\lambda$. Instead, according to \eqref{eq:sserandomEx} and \eqref{eq:sserandomVar}, we can only conclude that the $L^{2}$ integral of $u_\lambda$ on the ball $B(x,r)$ is proportional to the normalised volume of the ball.

However, with a geometric condition on the manifold $\M$, we are able to recover small scale equidistribution results. The relevant condition is the set of the geodesic loop directions
$$\Loop_x:=\{\xi\in S^*\M:G_t(x,\xi)=(x,\eta)\text{ for some }t>0\text{ and }\eta\in S_x^*\M\}$$
is of measure zero in $S^*_x\M$ for all $x\in\M$. Here, $S_x^*\M$ is the cosphere space of $\M$ at $x$, $S^*\M$ is the cosphere bundle of $\M$ and $G_{t}(x,\xi)$ is the geodesic flow on $\M$. This pointwise aperodic condition is called the non self-focal condition. Examples of manifolds satisfying the non self-focal condition include the negatively curved manifolds (i.e. all sectional curvatures are negative everywhere.) Since manifolds with negative curvature are a key class of manifolds that we wish to understand using randomisation, making such an assumption is not as restrictive as may first appear. 

The above non self-focal condition is a natural dynamical condition to study the precise behavior of eigenfunctions restricted to a fixed-width spectrum window. See Section \ref{sec:spec} for the background. 

Concerning the small scale equidistribution of random waves in fixed-width spectral windows, we prove that 
\begin{thm}[Small scale equidistribution of random waves in fixed-width spectral windows]\label{thm:sserandomfixed}
Suppose that $W\ge1$ is independent of $\lambda$. Assume that the set of loop directions $\Loop_x$ is of measure zero in $S_x^*\M$ for all $x\in\M$. 
\begin{enumerate}[(i).]
\item The expectation
$$\E\left(\int_{B(x,r)}|u_\lambda|^2\,d\Vol\right)=\frac{\Vol(B(x,r))}{\Vol(\M)}+o\left(r^n\right)\quad\text{as }\lambda\to\infty,$$
where the term $o\left(r^n\right)$ is independent of $x$.
\item  If in addition $r^{-1}=o(\lambda)$, i.e. $r/r_\Planck\to\infty$, then the variance
$$\Var\left(\int_{B(x,r)}|u_\lambda|^2\,d\Vol\right)=o\left(r^{2n}\right)\quad\text{as }\lambda\to\infty,$$
where the term $o\left(r^{2n}\right)$ is independent of $x$.
\end{enumerate}
\end{thm}

\begin{rmk}
In particular, if we choose $W=1$, then $u_\lambda$ in Theorem \ref{thm:sserandomfixed} is the asymptotically fixed frequency ensemble considered in Zelditch \cite{Z3}. In this case, Theorem \ref{thm:sserandomfixed} states that such ensembles are equidistributed at all scales $r$ such that $r/r_\Planck\to\infty$, on manifolds that satisfy the non self-focal condition.
\end{rmk}

Theorems \ref{thm:sserandom} and \ref{thm:sserandomfixed} give sufficient conditions such that a typical random wave $u_{\lambda,a}$ equidistributes on the ball $B(x,r)$ for $x\in\M$ and $r=r(\lambda)$ such that $r/r_\Planck\to\infty$. That is, the probability (i.e. measure in the probability space $\s^{N_W(\lambda)-1}$) that $u_{\lambda,a}$ \textit{does not} equidistribute on $B(x,r)$ decays in $\lambda$. 

However, neither theorem provides a quantitative estimate of the decay of such probability, nor do they conclude equidistribution of $u_{\lambda,a}$ on the whole manifold, i.e. on balls $B(x,r)$ for all $x\in\M$. We now address these two problems and provide a quantitative estimate of the probability for small scale equidistribution of random waves $u_{\lambda,a}$ \textit{uniformly} on $\M$. We measure the deviation from equidistribution with an $o(1)$ order function. That is, we say $m(\lambda)$ is an $o(1)$ order function if $m(\lambda):\R^{+}\to\R^{+}$ and $m(\lambda)\to0$ as $\lambda\to\infty$. 

For the random variable $a\in\s^{N_{W}(\lambda)-1}$, we say that $u_{\lambda,a}$ equidistributes at scale $r=r(\lambda)$ to order $m(\lambda)$ \textit{uniformly} on $\M$ if
\begin{equation}\label{eq:SSEonM}
\sup_{x\in\M}\left|\int_{B(x,r)}|u_{\lambda,a}|^2\,d\Vol-\frac{\Vol(B(x,r))}{\Vol(\M)}\right|\leq{}r^nm(\lambda).
\end{equation} 
Therefore, the introduction of $m(\lambda)$ quantitatively characterizes the reminder term in the equidistribution statements. In general, to make the remainder smaller (and equidistribution better), we need to pick larger scale balls.

We define the exceptional set $S_{r}(m)$ as the set of point in $\s^{N_{W}(\lambda)-1}$ where \eqref{eq:SSEonM} fails.
\begin{defn}
The exceptional set $S_{r}(m)$ is given by
$$S_{r}(m)=\left\{a\in \s^{N_{W}(\lambda)-1}:\exists x\in\M\text{ such that }\left|\int_{B(x,r)}|u_{\lambda,a}|^2\,d\Vol-\frac{\Vol(B(x,r))}{\Vol(\M)}\right|\geq{}r^nm(\lambda)\right\}.$$
\end{defn}

In the following theorem, we prove that the exceptional set $S_{r}(m)$ has exponentially small measure in $\s^{N_{W}(\lambda)-1}$ at certain scales that are larger than the ones in Theorems \ref{thm:sserandom} and \ref{thm:sserandomfixed}.
 
\begin{thm}[Uniform equidistribution of random waves at small scales]\label{thm:SSEonM}
There exist constants $c,\alpha,K>0$ depending only on $\M$ such that the following statements are true.
\begin{enumerate}[(i).]
\item Suppose $W=W(\lambda)$ such that $1\le W\le\lambda$ and $W\to\infty$ as $\lambda\to\infty$. Set
$$r_1=W^{-\frac{1}{2n}}\lambda^{-\frac{n-1}{2n}}.$$
Let $m(\lambda)\geq{}KW^{-1}$ and $r=r(\lambda)$ such that 
$$\max\left\{W^{-1},\alpha\log(\lambda)^{\frac{1}{2n}}r_{1}m(\lambda)^{-\frac{1}{n}}\right\}\le r\le\Inj\M.$$  
Then 
$$\mu_{N_W(\lambda)}(S_{r}(m))\le\exp\left(-\frac{cr^{2n}m(\lambda)^{2}}{r_1^{2n}}\right).$$
\item Suppose $W=W(\lambda)$ such that $1\le W\le\lambda$ and $W\to\infty$ as $\lambda\to\infty$. Set
$$r_2=W^{\frac{1}{2(n-1)}}\lambda^{-\frac12}.$$
Let $m(\lambda)\geq{}KW^{-1}$ and $r=r(\lambda)$ such that 
$$\alpha r_2\log(\lambda)^{\frac{1}{2(n-1)}}m(\lambda)^{-\frac{1}{n-1}}\le r\le W^{-1}.$$ 
Then 
$$\mu_{N_W(\lambda)}(S_{r}(m))\le\exp\left(-\frac{cr^{2(n-1)}m(\lambda)^{2}}{r_2^{2(n-1)}}\right).$$
\item Assume that the set of loop directions $\Loop_x$ is of measure zero in $S_x^*\M$ for all $x\in\M$. Suppose that $W>0$ is independent of $\lambda$. Then there exists an $o(1)$ order function $m(\lambda)$ such that if 
$$r\ge\alpha\log(\lambda)^{\frac{1}{2(n-1)}}\lambda^{-1/2}m(\lambda)^{-\frac{1}{n-1}},$$ 
then
$$\mu_{N_W(\lambda)}(S_{r}(m))\le\exp\left(-c\lambda^{n-1}r^{2(n-1)}m(\lambda)^{2}\right).$$
\end{enumerate}
\end{thm}

\begin{rmk}
The generality of Thoerem \ref{thm:SSEonM} can make it difficult to parse. In the special cases of the cut-off and asymptotically fixed ensembles, the results can be stated in a simpler fashion. That is, Theorem \ref{thm:SSEonM} concludes uniform equidistribution of $u_{\lambda,a}$ at scales approaching $\lambda^{-1/2}$ except an exponentially small set $S\subset\s^{N_{W}(\lambda)-1}$. More precisely,
\begin{enumerate}[(1).]
\item If $W=\lambda$, then $u_{\lambda,a}$ is the cut-off ensemble. By Case (i) of Theorem \ref{thm:SSEonM}, with $m(\lambda)=(\log\lambda)^{-\ve}$ for $\ve>0$ arbitrarily small, we have equidistribution up to scales $r$ such that $r\geq{}\alpha\log(\lambda)^{\frac{1+2\ve}{2n}}\lambda^{-1/2}$.
\item If $W=1$, then $u_{\lambda,a}$ is the cut-off ensemble. Assume further the loopset condition as in Case (iii) of Theorem \ref{thm:SSEonM}. Then for some $o(1)$ order function $m(\lambda)$, we have equidistribtuion up to scales $r$ such that $r\geq{}\alpha\log(\lambda)^{\frac{1}{2n}}m(\lambda)^{-\frac{1}{n-1}}\lambda^{-1/2}$.
\end{enumerate}
\end{rmk}

Throughout this paper, $A\lesssim B$ ($A\gtrsim B$) means $A\le cB$ ($A\ge cB$) for some constant $c$ depending only on the manifold; $A\approx B$ means $A\lesssim B$ and $B\lesssim A$; the constants $c$ and $C$ may vary from line to line.

\section{Preliminaries}\label{sec:pre}
A key technique in the study of randomisations of eigenfunctions is to reduce questions about the expectation or variance of a random variable to problems involving the spectral projection operator. (See Proposition \ref{prop:viaspec}.) On a compact manifold $\M$, let $\{e_j\}_{j=0}^\infty$ be a real-valued eigenbasis of the Laplacian $\Delta$ with eigenvalues $\lambda_j^2$. Then the spectral projection operator onto the space 
$$\Span_{\lambda_{j}\in[0,\lambda]}\{e_j\}$$
has the kernel 
$$E_{[0,\lambda]}(x,y)=\sum_{\lambda_{j}\in[0,\lambda]}e_{j}(x)e_{j}(y).$$
The highest order asymptotics of the kernel of $E_{[0,\lambda]}$ are well understood and there are a number of estimates linking the geometry of $\M$ to the behaviour of lower order terms. (See Theorems \ref{thm:spec} and \ref{thm:specImprov}.) In \S\ref{sec:spec}, we recall these spectral estimates of the Laplacian and their connection to underlying geometry. 

In \S \ref{sec:prob}, we discuss some probabilistic estimates including the Levy concentration of measure (Theorem \ref{thm:Levy}) from probability theory. It is this concentration of measure that we use to prove uniform equidistribution of random waves at small scales in Theorem \ref{thm:SSEonM}.

\subsection{Spectral estimates}\label{sec:spec}
Let $T^*\M=\{(x,\xi):x\in\M,\xi\in T_x^*\M\}$ be the cotangent bundle of $\M$ and $|\cdot|_x$ be the induced metric on the cotangent space $T_x^*\M$. We denote $\Exp_y$ the exponential map at $y\in\M$. Since $\Inj\M\ge1$, $\Exp_y(x)$ is diffeomophism if $d(x,y)$ is small enough. Here, $d$ denotes the Riemannian distance on $\M$. The next theorem from H\"ormander \cite{Ho} provides the estimates of the kernel $E_{[0,\lambda]}$.

\begin{thm}[Spectral projection kernel estimates]\label{thm:spec}
There is constant $d_0$ depending only on $\M$ such that if $d(x,y)<d_0$, then
$$E_{[0,\lambda]}(x,y)=\frac{1}{(2\pi)^{n}}\int_{|\xi|_{g_{y}}\le\lambda}e^{i\langle\Exp^{-1}_{y}(x),\xi\rangle}\,\frac{d\xi}{\sqrt{|g_{y}|}}+R(x,y,\lambda),$$
where $R(x,y,\lambda)=O(\lambda^{n-1})$ as $\lambda\to\infty$ uniformly for $x,y\in\M$ such that $d(x,y)<d_0$. 
\end{thm}

Letting $x=y$ in the above theorem, we immediately get the pointwise Weyl asymptotics as well as the Weyl asymptotics for the distribution of eigenvalues.
\begin{cor}[Pointwise Weyl asymptotics]\label{cor:Weyl}
We have that
\begin{equation}\label{eq:ptWeyl}
E_{[0,\lambda]}(x,x)=\sum_{\lambda_j\le\lambda}|e_j(x)|^2=c_n\lambda^n+R(\lambda,x),\quad\text{where }R(\lambda,x)=O(\lambda^{n-1})\text{ as }\lambda\to\infty
\end{equation}
uniformly for all $x\in\M$. Here, $c_n$ is the volume of the unit ball in $\R^n$. Moreover, let $N(\lambda):=\#\{j:\lambda_j\le\lambda\}$. Then
\begin{equation}\label{eq:Weyl}
N(\lambda)=c_n\Vol(\M)\lambda^n+R(\lambda),\quad\text{where }R(\lambda)=O(\lambda^{n-1})\text{ as }\lambda\to\infty.
\end{equation}
\end{cor}

The remainder term estimate $R(\lambda,x)=O(\lambda^{n-1})$ in \eqref{eq:ptWeyl} is sharp on the sphere $\s^n$. The sharp growth rate $\lambda^{n-1}$ is achieved at the poles of zonal harmonics on $\s^n$. See H\"ormander \cite[Section 6]{Ho}.

However, on some manifolds other than the sphere, the above estimates of $R(\lambda,x)$ and $R(\lambda)$ may be improved. Such improvements are related to the dynamical properties of the geodesic flow on $\M$. The geodesic flow $G_t$ is the Hamiltonian flow with Hamiltonian defined on $T^*\M$ as $H(x,\xi)=|\xi|_x^2$. The geodesic flow $G_t$ preserves the Liouville measure on $T^*\M$. Write the cosphere bundle $S^*\M=\{(x,\xi)\in T^*\M:|\xi|_x=1\}$. Then $G_t$ acts on $S^*\M$ by homogeneity and leaves the induced Liouville measure on $S^*\M$ invariant.

Denote the set of periodic geodesics on $S^*\M$ as
$$\Pi=\{(x,\xi)\in S^*\M:G_t(x,\xi)=(x,\xi)\text{ for some }t>0\}.$$
Duistermaat-Guillemin \cite{DG} proved that 
\begin{thm}[Improved Weyl asymptotics]\label{thm:WeylImprov}
Assume that the set of periodic geodesics $\Pi$ is of Liouville measure zero in $S^*\M$. Then
\begin{equation}\label{eq:WeylImprov}
N(\lambda)=c_n\Vol(\M)\lambda^n+R(\lambda),\quad\text{where }R(\lambda)=o(\lambda^{n-1})\text{ as }\lambda\to\infty.
\end{equation}
\end{thm}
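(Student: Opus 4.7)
The plan is to use the Fourier (wave trace) method that already yields the sharp bound $R(\lambda) = O(\lambda^{n-1})$ in Theorem \ref{thm:spec}, and to refine the Tauberian input using the measure-zero hypothesis on the periodic set $\Pi$. First I would consider the tempered distribution
$$\sigma(t) := \Trace e^{it\sqrt{\Delta}} = \sum_j e^{it\lambda_j},$$
whose Fourier inversion against a Schwartz cutoff $\rho$ with $\hat\rho \in C_c^\infty(\R)$ gives the smoothed counting identity
$$(\rho * dN)(\lambda) = \sum_j \rho(\lambda - \lambda_j) = \frac{1}{2\pi}\int \hat\rho(t)\, e^{-it\lambda}\, \sigma(t)\,dt.$$
By the H\"ormander Fourier integral operator parametrix for $e^{it\sqrt{\Delta}}$, the singular support of $\sigma$ lies in $\{0\} \cup (\pm L)$, where $L \subset \R_{>0}$ is the length spectrum of closed geodesics on $S^*\M$.

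Next, I would extract the leading-order contribution from the singularity at $t=0$. Choosing $\hat\rho$ supported in a small interval around $0$ with $\hat\rho(0)=1$ and applying stationary phase to the principal symbol of $\sigma$ at $t=0$, one recovers the Weyl main term
$$(\rho * dN)(\lambda) = n\,c_n\, \Vol(\M)\,\lambda^{n-1} + o(\lambda^{n-1}).$$
To convert this smoothed asymptotic into the desired sharp asymptotic for $N(\lambda)$ itself with an $o(\lambda^{n-1})$ remainder, I would invoke a Tauberian theorem for the monotone function $N$, replacing $\rho$ by a family $\rho_T$ with $\widehat{\rho_T}$ supported in $[-T,T]$ and $T = T(\lambda) \to \infty$ sufficiently slowly. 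Taking $T \to \infty$ picks up contributions from the nonzero singularities of $\sigma$ at the closed geodesic lengths, so these must be controlled.

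The key use of the measure-zero hypothesis on $\Pi$ occurs at this step. Near a periodic time $t_0 \in L$, the contribution of $\sigma(t)$ to the Fourier integral is, by the clean-intersection calculus of Duistermaat-Guillemin, governed by a Lagrangian distribution whose principal symbol is supported on the fixed-point set of the geodesic flow at time $t_0$, and its integrated magnitude is bounded by the Liouville measure of the set of $(x,\xi) \in S^*\M$ periodic with period close to $t_0$. Under the hypothesis $|\Pi| = 0$, summing these contributions over the length spectrum (with an appropriate absorption of overlapping pieces) yields an $o(\lambda^{n-1})$ bound; after Tauberian inversion this produces the claimed improved remainder \eqref{eq:WeylImprov}.

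The hard part is the microlocal analysis at step three: one must show that the periodic-orbit piece of $\sigma(t)$ is, after testing against $e^{-it\lambda}\widehat{\rho_T}(t)$, of strictly smaller order than $\lambda^{n-1}$, uniformly as $T = T(\lambda)$ grows. This requires the clean intersection hypothesis for $G_t$, a careful stationary phase along the normal bundle to the periodic set, and a coordinated choice of $T(\lambda)$ balancing the Tauberian error against the accumulation of closed-orbit contributions. This analysis is exactly what is carried out in Duistermaat-Guillemin \cite{DG}, so in practice I would invoke their theorem directly rather than redo the full argument here.
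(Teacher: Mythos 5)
The paper offers no proof of this theorem — it is stated and cited directly from Duistermaat--Guillemin \cite{DG} — and your sketch reproduces the DG wave-trace/Tauberian argument before ultimately deferring to \cite{DG} as well, so the approaches coincide. One small inaccuracy worth noting: the measure-zero improvement in \cite{DG} does \emph{not} require the clean-intersection hypothesis you invoke (that is only needed for explicit Gutzwiller-type coefficients at the periods); the $o(\lambda^{n-1})$ bound on the periodic-orbit contribution comes instead from an upper-semicontinuity argument bounding the singularity symbol by the Liouville measure of the periodic set, but since you cite \cite{DG} directly this does not affect the validity of the proposal.
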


To get the improvement of pointwise Weyl law, we need a pointwise dynamical condition on the geodesics that is similar to the one in Theorem \ref{thm:WeylImprov}. A geodesic loop through $x$ is a geodesic $L(t)$ parametrized by arclength so that for some $t_0>0$ such that $L(0)=L(t_0)=x$. Define the loop directions at $x$ as
$$\Loop_x:=\{\xi\in S^*\M:G_t(x,\xi)=(x,\eta)\text{ for some }t>0\text{ and }\eta\in S_x^*\M\}.$$
Canzani-Hanin \cite{CH} proved that 
\begin{thm}[Improved spectral projection estimate]\label{thm:specImprov}
Assume that $\Loop_x$ is of measure zero in $S_x^*\M$ for all $x\in\M$. Then
$$E_{[0,\lambda]}(x,y)=\frac{1}{(2\pi)^{n}}\int_{|\xi|_{g_{y}}<\lambda}e^{i\langle\Exp^{-1}_{y}(x),\xi\rangle}\,\frac{d\xi}{\sqrt{|g_{y}|}}+R(x,y,\lambda),$$
where $R(x,y,\lambda)=o(\lambda^{n-1})$ uniformly for all $x,y\in\M$. In particular, the pointwise Weyl asymptotic asserts that
\begin{equation}\label{eq:ptWeylImprov}
E_{[0,\lambda]}(x,x)=\sum_{\lambda_j\le\lambda}|e_j(x)|^2=c_n\lambda^n+R(\lambda,x),\quad\text{where }R(\lambda,x)=o(\lambda^{n-1})\text{ as }\lambda\to\infty
\end{equation}
uniformly for all $x\in\M$. 
\end{thm}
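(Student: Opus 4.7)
The plan is to adapt H\"ormander's Fourier--Tauberian strategy for spectral projections using the half-wave kernel, and to extract the improved remainder from a careful long-time analysis under the non self-focal hypothesis, in the spirit of Canzani--Hanin.

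I begin by introducing the half-wave kernel $U(t,x,y)=\sum_j e^{-it\lambda_j}e_j(x)e_j(y)$, the Schwartz kernel of $e^{-it\sqrt\Delta}$. Pick a Schwartz function $\rho$ with $\hat\rho\in C_c^\infty(\R)$, $\hat\rho(0)=1$, and $\supp\hat\rho\subset(-\epsilon,\epsilon)$ for some $\epsilon<\Inj\M/2$. The smoothed spectral derivative is then
$$\widetilde\sigma(\lambda,x,y):=\sum_j\rho(\lambda-\lambda_j)e_j(x)e_j(y)=\frac{1}{2\pi}\int \hat\rho(t)\,e^{it\lambda}U(t,x,y)\,dt.$$
For $|t|<\epsilon$ the Hadamard parametrix realises $U(t,\cdot,\cdot)$ as a Fourier integral operator with generating phase $\langle\Exp^{-1}_y(x),\xi\rangle-t|\xi|_{g_y}$; evaluating by stationary phase in $t$ yields
$$\widetilde\sigma(\lambda,x,y)=\frac{1}{(2\pi)^n}\int_{|\xi|_{g_y}=\lambda}e^{i\langle\Exp^{-1}_y(x),\xi\rangle}\frac{d\xi}{\sqrt{|g_y|}}+O(\lambda^{n-2})$$
uniformly for $x,y$ within the injectivity radius, which, once integrated in $\lambda$, is a candidate for the principal term in the theorem.

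Next I would invoke a Fourier--Tauberian theorem (H\"ormander/Sogge formulation) to upgrade the smoothed asymptotic into an unsmoothed estimate for $E_{[0,\lambda]}(x,y)$, which requires the companion bound $\sum_{|\lambda_j-\lambda|\leq 1}|e_j(x)|^2=o(\lambda^{n-1})$ uniformly in $x$. By the Cauchy--Schwarz inequality $|E_{[0,\lambda]}(x,y)|\leq\sqrt{E_{[0,\lambda]}(x,x)E_{[0,\lambda]}(y,y)}$ and differencing in $\lambda$, the off-diagonal remainder is controlled once the diagonal one is. The diagonal improvement in turn reduces to showing, for a smooth time cutoff $\psi_T$ supported in $[\epsilon/2,T]$ with $T=T(\lambda)\to\infty$ sufficiently slowly,
$$\int \psi_T(|t|)\,e^{it\lambda}U(t,x,x)\,dt=o(\lambda^{n-1})$$
uniformly in $x$. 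Since $U(t)$ for $t$ bounded away from zero is an FIO whose canonical relation is the graph of the lifted geodesic flow, stationary phase in $(t,\xi)$ concentrates this contribution on directions $\xi\in S_x^*\M$ for which $G_t(x,\xi)\in S_x^*\M$ at some $t\in[\epsilon/2,T]$, i.e.\ on a $T$-truncation of $\Loop_x$.

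The main obstacle is the uniformity in $x$ of this dynamical estimate. The hypothesis $|\Loop_x|=0$ is only pointwise, so one must show that the Liouville measure of the truncated loop set $\Loop_x^{\leq T}:=\{\xi\in S_x^*\M:G_t(x,\xi)\in S_x^*\M\text{ for some }\epsilon/2\leq t\leq T\}$ tends to zero \emph{uniformly in $x$} as $T\to\infty$. This demands an upper semicontinuity argument for $x\mapsto|\Loop_x^{\leq T}|$, combined with compactness of $\M$ and a diagonal choice of $T(\lambda)$ slaving the two limits; this is the technical heart of the Canzani--Hanin argument and the step I expect to demand the most care. Once this uniformity is secured, the Tauberian step together with Cauchy--Schwarz deliver $R(x,y,\lambda)=o(\lambda^{n-1})$ uniformly in $x,y$, completing the proof.
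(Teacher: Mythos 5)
The paper does not actually prove this statement: Theorem \ref{thm:specImprov} is quoted as background, with the diagonal case \eqref{eq:ptWeylImprov} due to Safarov and Sogge--Zelditch and the off-diagonal remainder due to Canzani--Hanin \cite{CH}. So your attempt is measured against that cited literature rather than an in-paper argument, and your overall strategy (short-time Hadamard parametrix for the half-wave kernel, a Fourier--Tauberian step, and a long-time analysis in which the contribution of times $t\in[\epsilon/2,T]$ is shown to be $o(\lambda^{n-1})$ using the measure-zero loop-set hypothesis) is indeed the strategy of those works.

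There is, however, a genuine gap in your reduction of the off-diagonal remainder to the diagonal one. The inequality $|E_{[0,\lambda]}(x,y)|\le\sqrt{E_{[0,\lambda]}(x,x)\,E_{[0,\lambda]}(y,y)}$ only bounds the kernel by $O(\lambda^{n})$ and says nothing about the difference between $E_{[0,\lambda]}(x,y)$ and the oscillatory principal term. What Cauchy--Schwarz does give, after differencing in $\lambda$, is the unit-band bound $\bigl|\sum_{\lambda_j\in[\lambda,\lambda+1]}e_j(x)e_j(y)\bigr|=o(\lambda^{n-1})$, which is the correct input for a Tauberian theorem for the non-monotone function $\lambda\mapsto E_{[0,\lambda]}(x,y)$; but that Tauberian step also requires the long-time smoothed asymptotics off the diagonal, namely $\int\psi_T(|t|)e^{it\lambda}U(t,x,y)\,dt=o(\lambda^{n-1})$ uniformly, and the dynamical set governing this is not $\Loop_x$ but the set of directions $\xi\in S_x^*\M$ whose geodesics pass near $y$ at some time $\le T$. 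Controlling that set uniformly over near-diagonal pairs $(x,y)$ --- together with the uniform-in-$x$ smallness of the truncated loop sets $\Loop_x^{\le T}$, which you correctly identify but defer --- is precisely the new content of Canzani--Hanin and does not follow formally from the diagonal estimate. As written, your proposal would (modulo the deferred uniformity argument) yield the pointwise Weyl improvement \eqref{eq:ptWeylImprov}, but not the off-diagonal bound $R(x,y,\lambda)=o(\lambda^{n-1})$, which is the part of the theorem the paper actually needs in the variance estimate of Proposition \ref{prop:variance}.
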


\begin{rmk}\hfill
\begin{enumerate}[(1).]
\item If $\Loop_x$ is of measure zero on $S_x^*\M$ for all $x\in\M$, then the set of periodic geodesics $\Pi$ is of Liouville measure zero on $S^*\M$. Hence, one has that $R(\lambda)=o(\lambda^{n-1})$ as $\lambda\to\infty$ as an immediate corollary of Theorem \ref{thm:specImprov} (one can also instead integrate \eqref{eq:ptWeylImprov} on $\M$ directly).

\item There is a long history of works investigating the relationship between the geometric condition of the manifold and the improved pointwise Weyl asymptotic \eqref{eq:ptWeylImprov} over \eqref{eq:Weyl}. See Safarov \cite{Sa}, Sogge-Zelditch \cite{SZ}, Sogge-Toth-Zelditch \cite{STZ}, and Canzani-Hanin \cite{CH} for more details.
\end{enumerate}
\end{rmk}

\subsection{Probabilistic estimates}\label{sec:prob}
We define in \eqref{eq:ulambda} the random waves $u=\sum a_je_j$ such that the random coefficients are chosen from the unit sphere with uniform probability measure. In this section, we gather some standard estimates of this probabilistic density.

We mention that there are different randomisation precedures (\cite{B, CI, Z3}) where $a_j$ are chosen as identical and independent variables with proper normalization, e.g. Gaussian random variables or Rademacher random variables. As noted by Zelditch \cite[Section 0.1]{Z3}, choosing the random variables from the unit sphere is more intuitive. In addition, on the spheres, the Levy concentration of measure (Theorem \ref{thm:Levy}) is crucial to establish the uniform equidistribution of random waves in Theorem \ref{thm:SSEonM}. 

Let $\s^{d-1}\subset\R^d$ be the $(d-1)$-$\dim$ unit sphere endowed with the uniform probability measure $\mu_d$. Write
$$u=\sum_{j=1}^da_js_j,\quad\text{where }a=(a_1,...,a_{d})\in\s^{d-1}\text{ and }s=(s_1,...,s_{d})\in\R^{d}.$$
Notice that
$$|u|>t\quad\text{if and only if}\quad|\langle(a_1,...,a_{d}),(s_1(x),...,s_{d}(x))\rangle_{\R^{d}}|>t.$$
We then have the following fact. See e.g. Burq-Lebeau \cite[Appendix A]{BL} for an elementary proof.
\begin{lemma}\label{lemma:spheredistr}
$$\mu_d(|u|>t)=\begin{cases}
\left(1-\frac{t^2}{|s|^2}\right)^{\frac{d}{2}-1} & \text{if }0\le t<|s|,\\
0 & \text{if }t\ge |s|,
\end{cases}$$
where $|s|$ is the length of $s=(s_1,...,s_{d})\in\R^{d}$.
\end{lemma}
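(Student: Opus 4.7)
The plan rests on two ingredients: the $O(d+1)$-rotational invariance of $\mu_d$ and an explicit spherical-cap computation.

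First, since $\mu_d$ is invariant under orthogonal transformations of $\R^{d+1}$, I pick an orthogonal transformation sending $s/|s|$ to $e_1=(1,0,\ldots,0)$. Under this rotation $u=\langle a,s\rangle$ has the same distribution as $|s|\,a_1$, so setting $c:=t/|s|$ the problem reduces to computing $\mu_d(|a_1|>c)$. The case $t\ge|s|$ (i.e.\ $c\ge 1$) is immediate: by Cauchy--Schwarz $|u|\le|s|$ always, so $\mu_d(|u|>t)=0$. For $c\in[0,1)$ the set $\{|a_1|>c\}$ is a pair of antipodal spherical caps on $\mathbb{S}^d$, and it suffices to show its $\mu_d$-measure equals $(1-c^2)^{(d-1)/2}$.

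Next, I parametrize $\mathbb{S}^d$ via $(\theta,\omega)\in[0,\pi]\times\mathbb{S}^{d-1}$ by $a=(\cos\theta,\sin\theta\cdot\omega)$, giving induced volume form $\sin^{d-1}\theta\,d\theta\,d\sigma_{\mathbb{S}^{d-1}}(\omega)$. Integrating out $\omega$ and using the antipodal symmetry $a\mapsto-a$ yields
\[
\mu_d(|a_1|>c)=\frac{2\,\sigma(\mathbb{S}^{d-1})}{\sigma(\mathbb{S}^d)}\int_0^{\arccos c}\sin^{d-1}\theta\,d\theta,
\]
and the substitution $x=\cos\theta$ recasts this as a constant multiple of $\int_c^1(1-x^2)^{(d-2)/2}\,dx$.

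The remaining step, which I expect to be the technical crux, is to evaluate this incomplete integral and combine it with the surface-area identity $\sigma(\mathbb{S}^{d-1})/\sigma(\mathbb{S}^d)=\Gamma((d+1)/2)/\bigl(\sqrt{\pi}\,\Gamma(d/2)\bigr)$ so that the resulting gamma factors collapse into the closed form $(1-c^2)^{(d-1)/2}$. A possibly cleaner alternative, worth attempting in parallel, is the Gaussian representation: if $X=(X_1,\ldots,X_{d+1})$ are i.i.d.\ standard normals, then $a:=X/|X|$ is $\mu_d$-distributed on $\mathbb{S}^d$ and $a_1^2=X_1^2/|X|^2$ follows a $\mathrm{Beta}(1/2,d/2)$ law, so the claim reduces to an incomplete-beta evaluation. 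As a sanity check I would verify that the endpoint values $\mu_d(|u|>0)=1$ and $\mu_d(|u|\ge|s|)=0$ are recovered.
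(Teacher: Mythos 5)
Your rotational reduction, the Cauchy--Schwarz argument for $t\ge|s|$, and the spherical-cap computation are all correct up to the point where you write
\[
\mu_d(|a_1|>c)=\frac{2\sigma(\s^{d-1})}{\sigma(\s^d)}\int_c^1(1-x^2)^{(d-2)/2}\,dx.
\]
The step you postponed as the ``technical crux'' is, however, where the plan fails: this incomplete integral does \emph{not} evaluate to $(1-c^2)^{(d-1)/2}$, and no arrangement of gamma factors will make it do so. Differentiating in $c$ already shows the two sides cannot agree, since $\frac{d}{dc}(1-c^2)^{(d-1)/2}=-(d-1)c\,(1-c^2)^{(d-3)/2}$ carries an extra factor of $c$ that your integral's derivative lacks. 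A concrete check: for $d=2$, Archimedes' hat-box theorem gives $\mu_2(|a_1|>c)=1-c$, while the lemma's display reads $\sqrt{1-c^2}$, and these differ for every $c\in(0,1)$. Your own Gaussian alternative actually contains the warning sign you were looking for: $a_1^2\sim\mathrm{Beta}(1/2,d/2)$, whose upper-tail probability is a genuine incomplete beta function, not a pure power of $1-c^2$. A $\mathrm{Beta}(1,\alpha)$ tail \emph{does} collapse to $(1-c^2)^\alpha$, and that is precisely what happens in the complex-coefficient setting: for $a$ uniform on the unit sphere of $\C^N$ (so $d=2N-1$), $|a_1|^2\sim\mathrm{Beta}(1,N-1)$ and $P(|a_1|>c)=(1-c^2)^{N-1}=(1-c^2)^{(d-1)/2}$ exactly. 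With real coefficients, as in the lemma's stated setup, $(1-c^2)^{(d-1)/2}$ is only an upper bound.

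For context, the paper gives no proof of Lemma~\ref{lemma:spheredistr}; it simply refers to Burq--Lebeau \cite{BuLe}. So there is no paper argument to match against, and the lemma as printed appears to carry a complex-case identity (or an inequality) over to a real-coefficient setup with an equality sign. This discrepancy is harmless to the rest of the paper, which never invokes Lemma~\ref{lemma:spheredistr}: the proofs of Propositions~\ref{prop:expectationF} and~\ref{prop:variance} rely on the moment identities $\E(a_i^2)=1/N_W(\lambda)$ and their fourth-moment analogues, and Section~\ref{sec:uniform} relies on Levy concentration (Theorem~\ref{thm:Levy}), not on this tail formula. Were you to prove the correct statement, the fastest route from where you stand is to stop at the inequality $\mu_d(|a_1|>c)\le(1-c^2)^{(d-1)/2}$, which follows from your integral representation by elementary estimates, rather than to try to force an equality that does not hold.
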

Because $|a_j|$, $j=1,...,d$, has identical distribution for $a\in\s^{d-1}$, it is obvious (since $\sum_{j=1}^{d}|a_j|^2=|a|^2=1$ for $a\in\s^{d-1}$) that $\E(|a_j|^2)=d^{-1}$ . Using Lemma \ref{lemma:spheredistr}, one can directly compute the $p$-moment $\E(|a_j|^p)$ for $p\ge0$. Taking $s$ as the unit vector in the $j$-th axis of $\R^{d}$, we have that $\langle a,s\rangle=a_j$, therefore, 
\begin{eqnarray}
\E(|a_j|^p)&=&\int_{\s^{d-1}}|a_j|^p\,da.\nonumber\\
&=&\int_0^\infty t^{p-1}\mu_d(|a_j|>t)\,dt\nonumber\\
&=&\int_0^1t^{p-1}\left(1-t^2\right)^{\frac{d}{2}-1}\,dt\nonumber\\
&=&\frac12\beta\left(\frac{d}{2},\frac{p}{2}\right)\nonumber\\
&=&c_pd^{-\frac p2}+O_p\left(d^{-\frac p2-1}\right)\label{eq:pmoment}.
\end{eqnarray} 
Here, $\beta(\cdot,\cdot)$ is the beta function and $c_p$ is a constant that depends only on $p$.

In the estimation of the variance in Theorem \ref{thm:sserandom}, we also need the expectation of $\E(|a_i|^2|a_j|^2)$ for $i\ne j$. (See Section \ref{sec:sserandom}.) If $a_j$ are chosen as independent random variables such as in the Gaussian ensemble $\sum_{j=1}^da_je_j$, then we normalize $\E\left(\|u\|^2_{L^2}\right)=1$ by setting $\E(|a_j|^2)=d^{-1}$. Hence, it is straightforward to see that $\E(|a_i|^2|a_j|^2)=d^{-2}$ for $i\ne j$ from the independence of $a_i$ and $a_j$. More generally, for $1\le m\le d$ and any $j_1,...,j_m\in\{1,...,d\}$ distinct, we have that 
$$\E\left(|a_{j_1}|^2\cdots |a_{j_m}|^2\right)=d^{-m}.$$
In Lemma \ref{lem:expmom} below, we show that that when $a_j$ are chosen from the unit sphere $\s^{d-1}$, $\E\left(|a_{j_1}|^2\cdots |a_{j_m}|^2\right)=d^{-m}+O(d^{-m-1})$, which agrees with the independent variable case modulo lower order term. 

First, let $p=(p_1,...,p_{d})$ for $p_j\ge0$, $j=1,...,d$. Denote $|p|=p_1+\cdots+p_{d}$. Then by \eqref{eq:pmoment} and H\"older inequality, we have that
\begin{align}
\E\left(|a_{1}|^{p_{1}}\cdots|a_{d}|^{p_{d}}\right)&=\int_{\s^{d-1}}|a_{1}|^{p_1}\cdots|a_{d}|^{p_{d}}\,da\nonumber\\
&\le\left(\int_{\s^{d-1}}|a_{1}|^{|p|}\,da\right)^\frac{p_1}{|p|}\cdots\left(\int_{\s^{d-1}}|a_{d}|^{|p|}\,da\right)^\frac{p_{d}}{|p|}\nonumber\\
&=O\left(d^{-\frac{|p|}{2}}\right).\label{betas}
\end{align} 

We now prove the following fact.

\begin{lemma}\label{lem:expmom}
Let $1\le m\le d$ and $j_k\in\{1,...,d\}$, $k=1,...,m$, be distinct. Then
$$A_m:=\E\left(a_{j_1}^2\cdots a_{j_m}^2\right)=\frac{1}{d^{m}}\left(1+O_{m}\left(d^{-1}\right)\right).$$
\end{lemma}
\begin{proof}
Since $a\in\s^{d-1}$,
$$1=\left(\sum_{j=1}^{d}a_{j}^{2}\right)^m=\sum_{j_k\text{ are distinct}}a_{j_{1}}^{2}\cdots a_{j_{m}}^{2}+\sum_{\text{others}}a_{j_{1}}^{2}\cdots a_{j_{m}}^{2}.$$
The first summation on the right-hand-side has $d(d-1)\cdots(d-m+1)$ terms and the second summation has $O_m(d^{m-1})$ terms. Note that the expectations of all the terms in the first summation are identical. Hence, taking the expectation of both sides in the above equation, we have that
\begin{eqnarray*}
1&=&d(d-1)\cdots(d-m+1)A_m+\sum_{\text{others}}\E\left(a_{j_{1}}^{2}\cdots a_{j_{m}}^{2}\right)\\
&=&d(d-1)\cdots(d-m+1)A_m+O_{m}(d^{m-1})O(d^{-m}).
\end{eqnarray*}
where we use \eqref{betas} to estimates the expectations in the second sum. The lemma therefore follows.
\end{proof}

To establish the uniform equidistribution of random waves in Theorem \ref{thm:SSEonM}, we need to control the probability that a function deviates from the expectation. To this end, we use the principle of concentration of measure. It is here that the high dimensionality of the probability spaces we consider comes into play. Concentration of measure requires that a random variable $F(a)$ cannot take values away from its median too often. Exactly how close to the median depends on regularity properties of $F$. Let
$$\|F\|_\Lip:=\sup_{a\ne b}\frac{|F(a)-F(b)|}{\dist(a,b)},$$
where $\dist(\cdot,\cdot)$ is the geodesic distance on $\s^{d-1}$. A number $\Me(F)$ is said to be a median value of $F$ if
$$\mu_d(F\ge\Me(F))\ge\frac12\quad\text{and}\quad\mu_d(F\le\Me(F))\ge\frac12.$$
Levy concentration of measures \cite[Theorem 2.3, (1.10), and (1.12)]{Le} then asserts that a Lipschitz function on $\s^{d-1}$ is highly concentrated around its median value when its dimension is large.
\begin{thm}[Levy concentration of measures]\label{thm:Levy}
Consider a Lipschitz function $F$ on $\s^{d-1}$. Then for any $t>0$, we have that
$$\mu_d(|F-\Me(F)|>t)\le\exp\left(-\frac{(d-2)t^2}{2\|F\|_\Lip^2}\right).$$
\end{thm}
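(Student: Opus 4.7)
The plan is to derive the concentration inequality from the Lévy spherical isoperimetric inequality. Setting $L=\|F\|_\Lip$ (the case $L=0$ being trivial), I would first introduce the sublevel set
$$A=\{a\in\s^d:F(a)\le\Me(F)\},$$
which satisfies $\mu_d(A)\ge 1/2$ by the definition of the median. Writing $A_\rho$ for the open geodesic $\rho$-neighborhood of $A$ on $\s^d$, the Lipschitz hypothesis yields the inclusion
$$\{F\ge\Me(F)+t\}\subset\s^d\setminus A_{t/L},$$
because if $a\in A_{t/L}$ then some $b\in A$ has $\dist(a,b)<t/L$, and hence $F(a)<F(b)+t\le\Me(F)+t$.

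The core step is the spherical isoperimetric inequality: among all measurable subsets of $\s^d$ of a given measure, spherical caps minimize the measure of their $\rho$-neighborhoods. Comparing $A$ with a hemisphere $H$ (a cap of measure exactly $1/2$), we obtain $\mu_d(A_{t/L})\ge\mu_d(H_{t/L})$, so
$$\mu_d(F\ge\Me(F)+t)\le\mu_d(\s^d\setminus H_{t/L}).$$
The right-hand side is the normalised surface area of a spherical cap of polar angle $\pi/2-t/L$, which can be written as a beta-type integral $\propto\int_{t/L}^{\pi/2}\cos^{d-1}(\theta)\,d\theta$. A direct estimate using $\cos\theta\le e^{-\theta^2/2}$ on $[0,\pi/2]$ together with a clean bound on the normalising constant yields the Gaussian tail
$$\mu_d(\s^d\setminus H_{t/L})\le\exp\left(-\frac{(d-1)t^2}{2L^2}\right).$$

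Replacing $F$ by $-F$, whose Lipschitz constant is the same and whose median is $-\Me(F)$, gives the matching lower-tail bound, and combining the two one-sided estimates produces the stated two-sided inequality. The main obstacle is the sharp cap-measure calculation that produces the precise exponent $(d-1)/2$ without any polynomial prefactor: one must balance the normalising constant of the beta integral against the Gaussian comparison so that no spurious $d$-dependent factor appears. Everything else — the median reduction, the Lipschitz inclusion, and the appeal to spherical isoperimetry — is a standard chain, and the detailed computation is carried out in \cite{Le}.
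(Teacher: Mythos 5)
The paper gives no proof of this statement at all---it is quoted directly from Ledoux \cite{Le} (Theorem 2.3 together with (1.10) and (1.12))---and your argument is precisely the standard chain behind that citation: the median sublevel set $A=\{F\le\Me(F)\}$ of measure at least $1/2$, the Lipschitz inclusion of $\{F\ge\Me(F)+t\}$ in the complement of the $t/\|F\|_\Lip$-neighbourhood of $A$, Lévy's spherical isoperimetric comparison with a hemisphere, the $\int\cos^{d-1}\theta\,d\theta$ cap estimate, and the reflection $F\mapsto-F$ for the lower tail. The one caveat is that combining the two one-sided tails by a union bound produces a prefactor $2$ (Ledoux's two-sided inequality indeed carries it), whereas the statement as printed in the paper omits it; this discrepancy lies in the paper's formulation rather than in your argument, and it is immaterial for the application in Section \ref{sec:uniform}, where only the exponential rate in $\lambda$ is used.
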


\section{Proofs of Theorems \ref{thm:sserandom} and \ref{thm:sserandomfixed}}\label{sec:sserandom}
In this section, we prove the small scale equidistribution results in Theorems \ref{thm:sserandom} and \ref{thm:sserandomfixed}. Recall that
$$\HH_{W}(\lambda)=\Span_{\lambda_{j}\in[\lambda-W,\lambda]}\{e_j\}\quad\text{and}\quad N_{W}(\lambda)=\dim\HH_W(\lambda).$$
We write the kernel of the spectral projection operator onto $\HH_W(\lambda)$ as 
$$E_{[\lambda-W,\lambda]}(x,y)=\sum_{\lambda_{j}\in[\lambda-W,\lambda]}e_j(x)e_j(y)\quad\text{for }x,y\in\M.$$ 
For $a\in\s^{N_{W}(\lambda)-1}$, let 
$$u_{\lambda,a}=\sum_{\lambda_{j}\in[\lambda-W,\lambda]}a_je_j\in\HH_W(\lambda)$$ 
be a random wave. In the following proposition, we reduce the estimates of expectation and variance in Theorems \ref{thm:sserandom} and \ref{thm:sserandomfixed} to integrals involving the spectral projection kernel $E_{[\lambda-W,\lambda]}$.

\begin{prop}[Expectation and variance of random waves via the spectral kernel]\label{prop:viaspec}
Let  $\Omega\subset\M$ be measurable. Write
\begin{equation}\label{Fdef}
F_\Omega(a)=\int_\Omega|u_{\lambda,a}(x)|^2\,dx.
\end{equation}
Then
$$(i).\quad\E(F_\Omega)=\frac{1}{N_W(\lambda)}\int_\Omega E_{[\lambda-W,\lambda]}(x,x)\,dx,$$
and
\begin{eqnarray*}
(ii).\ \Var(F_{\Omega})&=&\frac{2}{N_{W}(\lambda)^2}\left(1+O\left(\frac{1}{N_{W}(\lambda)}\right)\right)\int_{\Omega}\int_{\Omega}E^{2}_{[\lambda-W,\lambda]}(x,y)\,dxdy\\
&&+O\left(s_{\Omega}\frac{\E(F_{\Omega})}{N_{W}(\lambda)}\right)+O\left(\frac{\E(F_{\Omega})^2}{N_{W}(\lambda)}\right),
\end{eqnarray*}
where
\begin{equation}\label{eq:sOmega}
s_{\Omega}=\sup_{\lambda_j\in[\lambda-W,\lambda]}\int_{\Omega}e_{j}^{2}(x)\,dx.
\end{equation}
\end{prop}
\begin{proof}
For simplicity of notation, we renumber the eigenbasis of $\HH_W(\lambda)$ as $\{e_1,...,e_{N_W(\lambda)}\}$.

(i). To prove the expectation, denote 
$$e_{W,\lambda}(x)=|(e_1(x),...,e_{N_{W}(\lambda)}(x))|\quad\text{for }x\in\M,$$
that is, the length of the vector $(e_1(x),...,e_{N_{W}(\lambda)}(x))\in\R^{N_{W}(\lambda)}$. Then 
$$\E(F_\Omega)=\int_{\s^{N_{W}(\lambda)-1}}\int_\Omega\sum_{i,j=1}^{N_W(\lambda)}a_{i}a_{j}e_{i}(x)e_{j}(x)\,dxd\mu_{N_W(\lambda)}.$$
Recall that the $\mu_d$ is the uniform probability measure on the sphere $\s^{d-1}$. Since each of the $a_{i}$ has mean zero and $\E\left(a_{i}^{2}\right)=1/N_{W}(\lambda)$, the expectation follows 
$$\E(F_\Omega)=\frac{1}{N_{W}(\lambda)}\int_\Omega\sum_{i=1}^{N_W(\lambda)}|e_{i}(x)|^2\,dx=\frac{1}{N_W(\lambda)}\int_\Omega E_{[\lambda-W,\lambda]}(x,x)\,dx.$$

(ii). To prove the variance, we directly compute that
\begin{eqnarray*}
\Var(F_\Omega)&=&\int_{\s^{N_{W}(\lambda)-1}}\big|F_\Omega(a)-\E(F_\Omega)\big|^{2}\,d\mu_{{N_{W}(\lambda)}}\\
&=&\int_{\s^{N_{W}(\lambda)-1}}\left|\int_{\Omega}\sum_{i,j=1}^{N_W(\lambda)}a_{i}a_{j}e_{i}(x)e_{j}(x)\,dx-\E(F_\Omega)\right|^{2}\,d\mu_{N_{W}(\lambda)}\\
&=&\int_{\s^{N_{W}(\lambda)-1}}\int_{\Omega}\int_{\Omega}\sum_{i,j,k,l=1}^{N_W(\lambda)}a_{i}a_{j}a_{k}a_{l}e_{i}(x)e_{j}(x)e_{k}(y)e_{l}(y)\,dxdyd\mu_{N_{W}(\lambda)}\\
&&-2\E(F_\Omega)\int_{\s^{N_{W}(\lambda)-1}}\int_{\Omega}\sum_{i,j=1}^{N_W(\lambda)}a_{i}a_{j}e_{i}(x)e_{j}(x)\,dxd\mu_{N_{W}(\lambda)}+\E(F_\Omega)^{2}.
\end{eqnarray*}
Since each $a_{i}$ has mean zero on $\s^{N_{W}(\lambda)-1}$, any term containing odd powers of the $a_i$ is zero in expectation. So the terms with even powers remains only. Applying Lemma \ref{lem:expmom} with $m=2$ and $d=N_W(\lambda)$, we have that for $i\ne j$,
$$A_2=\E\left(a_i^2a_j^2\right)=\frac{1}{N_W(\lambda)^2}\left(1+O\left(N_W(\lambda)^{-1}\right)\right).$$
By \eqref{betas}, we have that $\E(a_i^4)=O\left(N_W(\lambda)^{-2}\right)$. Hence,
\begin{eqnarray*}
\Var(F_{\Omega})&=&\int_{\s^{N_{W}(\lambda)-1}}\int_{\Omega}\int_{\Omega}\sum_{i,j,k,l=1}^{N_W(\lambda)}a_{i}a_{j}a_{k}a_{l}e_{i}(x)e_{j}(x)e_{k}(y)e_{l}(y)\,dxdyd\mu_{N_{W}(\lambda)}-\E(F_\Omega)^{2}\\
&=&2\int_{\Omega}\int_{\Omega}\sum_{\substack{i,j=1\\ i\neq{}j}}^{N_W(\lambda)}A_{2}e_{i}(x)e_{i}(y)e_{j}(x)e_{j}(y)\,dxdy+\int_{\Omega}\int_{\Omega}\sum_{\substack{i,j=1\\ i\neq j}}^{N_W(\lambda)}A_{2}e_{i}^{2}(x)e_{j}^2(y)\,dxdy\\
&&+\int_{\Omega}\int_{\Omega}\sum_{i=1}^{N_W(\lambda)}\E(a_{i}^{4})e_{i}^{2}(x)e_{i}^{2}(y)dxdy-\E(F_{\Omega})^2\\
&=&\frac{2}{N_{W}(\lambda)^2}\left(1+O\left(\frac{1}{N_{W}(\lambda)}\right)\right)\int_{\Omega}\int_{\Omega}\sum_{i,j=1}^{N_W(\lambda)}e_{i}(x)e_{i}(y)e_{j}(x)e_{j}(y)\,dxdy\\
&&+\frac{1}{N_{W}(\lambda)^2}\left(1+O\left(\frac{1}{N_{W}(\lambda)}\right)\right)\int_{\Omega}\int_{\Omega}\sum_{i,j=1}^{N_W(\lambda)}e_{i}^{2}(x)e_{j}^{2}(y)dxdy-\E(F_\Omega)^{2}\\
&&+O\left(\frac{1}{N_{W}(\lambda)^2}\right)\int_{\Omega}\int_{\Omega}\sum_{i=1}^{N_W(\lambda)}e_{i}^{2}(x)e_{i}^{2}(y)\,dxdy\\
&=&\frac{2}{N_{W}(\lambda)^2}\left(1+O\left(\frac{1}{N_{W}(\lambda)}\right)\right)\int_{\Omega}\int_{\Omega}E^{2}_{[\lambda-W,\lambda]}(x,y)\,dxdy\\
&&+O\left(\frac{1}{N_{W}(\lambda)^2}\right)\int_{\Omega}\int_{\Omega}\sum_{i=1}^{N_W(\lambda)}e_{i}^{2}(x)e_{i}^{2}(y)\,dxdy\\
&&+O\left(\frac{1}{N_{W}(\lambda)^3}\right)\int_{\Omega}\int_{\Omega}\sum_{i,j=1}^{N_W(\lambda)}e_{i}^{2}(x)e_{j}^{2}(y)dxdy\\
&\le&\frac{2}{N_{W}(\lambda)^2}\left(1+O\left(\frac{1}{N_{W}(\lambda)}\right)\right)\int_{\Omega}\int_{\Omega}E^{2}_{[\lambda-W,\lambda]}(x,y)\,dxdy\\
&&+O\left(s_{\Omega}\frac{\E(F_{\Omega})}{N_{W}(\lambda)}\right)+O\left(\frac{\E(F_{\Omega})^2}{N_{W}(\lambda)}\right).
\end{eqnarray*}
Here, $s_\Omega$ is defined in \eqref{eq:sOmega} and we use the fact that
$$\int_{\Omega}\int_{\Omega}\sum_{i=1}^{N_W(\lambda)}e_{i}^{2}(x)e_{i}^{2}(y)\,dxdy\leq{}s_{\Omega}\int_{\Omega}\sum_{i=1}^{N_W(\lambda)}e_{i}^{2}(x)\,dx=s_{\Omega}N_{W}(\lambda)\E(F_{\Omega}).$$
\end{proof}

Now we prove the small scale equidistribution results of random waves in Theorem \ref{thm:sserandom}.

\begin{proof}[Proof of Theorem \ref{thm:sserandom}]
Since the spectral projection kernel
$$E_{[\lambda-W,\lambda]}=E_{[0,\lambda]}-E_{[0,\lambda-W)},$$
the spectral estimates of the Laplacian in Section \ref{sec:spec} apply.
 
(i). To prove the expectation we use the pointwise Weyl asymptotics in Corollary \ref{cor:Weyl} to see that
\begin{eqnarray*}
E_{[\lambda-W,\lambda]}(x,x)&=&E_{[0,\lambda]}(x,x)-E_{[0,\lambda-W)}(x,x)\\
&=&c_n\lambda^n+R(\lambda,x)-c_n(\lambda-W)^n-R(\lambda-W,x)\\
&=&nc_nW\lambda^{n-1}+O(\lambda^{n-1}).
\end{eqnarray*}
Also from the Weyl asmptotics of the eigenvalues 
\begin{equation}\label{eq:NWlambda}
N_W(\lambda)=nc_nW\lambda^{n-1}\Vol(\M)+O(\lambda^{n-1}).
\end{equation}
Setting $\Omega=B(x,r)$ in (i) of Proposition \ref{prop:viaspec}, we conclude \eqref{eq:sserandomEx} in Theorem \ref{thm:sserandom}.

(ii). To prove the variance, we need the off-diagonal description of $E_{[\lambda-W,\lambda]}(x,y)$ of Theorem \ref{thm:spec}. For $x,y\in\M$ close enough, say $d(x,y)<d_0$ as in Theorem \ref{thm:spec},
\begin{equation}\label{eq:EW}
E_{[\lambda-W,\lambda]}(x,y)=\frac{1}{(2\pi)^{n}}\int_{\lambda-W\le|\xi|_{g_y}\le\lambda}e^{i\langle\Exp^{-1}_{y}(x),\xi\rangle}\,\frac{d\xi}{\sqrt{|g_{y}|}}+R(x,y,\lambda)-R(x,y,\lambda-W).
\end{equation}
Here, for notational simplicity, we write
$$R(\lambda,W)=\sup_{d(x,y)<d_0}|R(x,y,\lambda)-R(x,y,\lambda-W)|.$$
Hence, by Theorem \ref{thm:spec}
\begin{equation}\label{eq:RlambdaW}
R(\lambda,W)=O(\lambda^{n-1}).
\end{equation}
Since $x$ and $y$ are close, we can assume that they are in the same coordinate patch. Indeed, we may assume that $y$ is the centre of that patch and $g_{y}=\Id$. Therefore, the integral in \eqref{eq:EW} becomes
$$\frac{1}{(2\pi)^{n}}\int_{\lambda-W<|\xi|\le\lambda}e^{i\langle\Exp^{-1}_{y}(x),\xi\rangle}\,{d\xi}.$$
Note that the inner product here is understood by associating $\Exp^{-1}_{y}(x)$ with an element of $\R^{n}$ so effectively what we need to calculate is
$$\frac{1}{(2\pi)^{n}}\int_{\lambda-W\le|\xi|\le\lambda}e^{i\langle z,\xi\rangle}\,d\xi=\frac{1}{(2\pi)^{n}}\int_{\lambda-W}^\lambda\int_{|\xi|=\rho}e^{i\langle z,\xi\rangle}\,d\xi d\rho.$$
That is, we need to take the inverse Fourier transform of the surface measure of the sphere of radius $\rho$. This is a classical problem from harmonic analysis and can be computed by stationary phase to give
$$\left|\int_{|\xi|=\rho}e^{i\langle z,\xi\rangle}\,{d\xi}\right|\le c\rho^{n-1}(1+\rho|z|)^{-\frac{n-1}{2}},$$
in which $c$ depends only on $n$. See e.g. Sogge \cite[Section 1.2]{So3}. Therefore,
$$\frac{1}{(2\pi)^{n}}\int_{\lambda-W<|\xi|\le\lambda}e^{i\langle\Exp^{-1}_{y}(x),\xi\rangle}\,{d\xi}\le\begin{cases}
cW\lambda^{n-1}, & \text{if }0\le|x-y|\le\lambda^{-1};\\
cW\lambda^\frac{n-1}{2}|x-y|^{-\frac{n-1}{2}}, & \text{if }\lambda^{-1}\le|x-y|\le1.
\end{cases}$$
Taking $\Omega=B(x_0,r)$ in (ii) of Proposition \ref{prop:viaspec}, we compute that
\begin{align*}
\int_{B(x_{0},r)}\int_{B(x_{0},r)}E_{[\lambda-W,\lambda]}^{2}(x,y)\,dxdy&\le c\int_{B(x_{0},r)}\int_{B(x_{0},r)\cap B(x,\lambda^{-1})}W^2\lambda^{2(n-1)}\,dydx\\
&\quad+c\int_{B(x_{0},r)}\int_{B(x_{0},r)\setminus B(x,\lambda^{-1})}W^2\lambda^{n-1}|x-y|^{-(n-1)}\,dydx\\
&\quad+\int_{B(x_{0},r)}\int_{B(x_{0},r)}R(\lambda,W)^{2}\,dxdy\\
&\le cW^2\lambda^{n-2}\Vol(B(x_0,r))+cW^2\lambda^{n-1}r\Vol(B(x_0,r))\\
&\quad+R(\lambda,W)^{2}\Vol(B(x_0,r))^2.
\end{align*}
Combining with \eqref{eq:NWlambda},
\begin{eqnarray}
\Var(F_{B(x_{0},r)})&=&\frac{2}{N_{W}(\lambda)^{2}}\left(1+O\left(\frac{1}{N_{W}(\lambda)}\right)\right)\int_{B(x_{0},r)}\int_{B(x_{0},r)}E_{[\lambda-W,\lambda]}^{2}(x,y)\,dxdy\nonumber\\
&&+ O\left(s_{B(x_{0},r)}\frac{\E(F_{B(x_{0},r)})}{N_{W}(\lambda)}\right)+O\left(\frac{\E(F_{B(x_{0},r)})^2}{N_{W}(\lambda)}\right)\nonumber\\
&\le&c\lambda^{-n}\Vol(B(x_0,r))+c\lambda^{-(n-1)}r\Vol(B(x_0,r))\nonumber\\
&&+cW^{-2}\lambda^{-2(n-1)}R(\lambda,W)^{2}\Vol(B(x_0,r))^2\nonumber\\
&&+O\left(s_{B(x_{0},r)}\frac{\E(F_{B(x_{0},r})}{N_{W}(\lambda)}\right)+O\left(\frac{\E(F_{B(x_{0},r)})^2}{N_{W}(\lambda)}\right).\label{eq:VarFx0}
\end{eqnarray}
If $r^{-1}=o(\lambda)$, then $\lambda^{-n}=o(r^n)$ and so
$$\lambda^{-n}\Vol(B(x_0,r))=o\left(r^{2n}\right)\quad\text{and}\quad\lambda^{-(n-1)}r\Vol(B(x_0,r))=o\left(r^{2n}\right).$$
By Lemma \ref{lemma:worstballs}, we have that 
$$s_{B(x_{0},r)}\leq 
\begin{cases}
CWr & \text{if }\lambda^{-1}\leq{}r\leq{}W^{-1}\\
1 & \text{if }W^{-1}\leq{}r\leq\Inj(\M).\end{cases}$$
So in either of the above cases, 
$$s_{B(x_{0},r)}\frac{\E(F_{B(x_{0},r)})}{N_{W}(\lambda)}=o(r^{2n}).$$
Therefore, (ii) in Theorem \ref{thm:sserandom} follows from the fact that $R(\lambda,W)=O(\lambda^{n-1})$ in  \eqref{eq:RlambdaW}.
\end{proof}

\iffalse
\begin{rmk}\hfill
\begin{enumerate}[(1).]
\item From the expectation estimate in Proposition \ref{prop:viaspec}, 
$$\E\left(\int_\Omega|u_\lambda|^{2}\,d\Vol\right)=\frac{\Vol(\Omega)}{\Vol(\M)}\left[1+O\left(W^{-1}\right)\right]$$
remains valid for any measurable set $\Omega$, not just for small balls $B(x,r)$. In particular, if $W\to\infty$, then
$$\E\left(\int_\Omega|u_\lambda|^{2}\,d\Vol\right)=\frac{\Vol(\Omega)}{\Vol(\M)}+o\left(\Vol(\Omega)\right),$$
that is, the expectation value of the $L^2$ mass always obey equidistribution at any scale on any set.

\item From the variance estimate in Proposition \ref{prop:viaspec} we have that
$$\Var\left(\int_\Omega|u_\lambda|^2\,d\Vol\right)=\Vol(B(x,r))^2\left[o(1)+O\left(W^{-2}\right)\right]$$
remains valid for any measurable set $\Omega$ if $s_{\Omega}=o(N_{W}(\lambda)\Vol(\Omega))$ and 
$$\frac{\Vol(B(x_{0},\lambda^{-1})\cap\Omega)}{\Vol(\Omega)}\to 0\quad\text{as }\lambda\to\infty.$$
The key point is that the main term of the asymptotic for $E_{[\lambda-W,\lambda]}(x,y)$ has some decay when $d(x,y)\gg \lambda^{-1}$. 

In particular, since $\|e_{j}\|_{L^\infty(\M)}\lesssim\lambda^{\frac{n-1}{2}}$ for eigenfunction $e_j\in\HH_W(\lambda)$ in \eqref{HWdef}, we see that
$$s_{\Omega}\lesssim\lambda^{n-1}\Vol(\Omega).$$
So if the width of the spectral window $W\to \infty$, then we always have that $s_{\Omega}=o(N_{W}(\lambda)\Vol(\Omega))$. 
 \end{enumerate}
\end{rmk}
\fi

We next provide a short proof of Theorem \ref{thm:sserandomfixed} in the case of fixed window length with the help of Theorem \ref{thm:specImprov}.

\begin{proof}[Proof of Theorem \ref{thm:sserandomfixed}]\hfill

(i). To prove the expectation, we use the improved pointwise Weyl asymptotics in Theorem \ref{thm:specImprov} to see that
\begin{eqnarray*}
E_{[\lambda-W,\lambda]}(x,x)&=&E_{[0,\lambda]}(x,x)-E_{[0,\lambda-W]}(x,x)\\
&=&c_n\lambda^n+R(\lambda,x)-c_n(\lambda-W)^n-R(\lambda-W,x)\\
&=&nc_nW\lambda^{n-1}+o(\lambda^{n-1}).
\end{eqnarray*}
We also have that
$$N_W(\lambda)=nc_nW\lambda^{n-1}\Vol(\M)+o(\lambda^{n-1}).$$
Taking $\Omega=B(x,r)$ in (i) of Proposition \ref{prop:viaspec}, we conclude (i) in Theorem \ref{thm:sserandomfixed}.

(ii) To prove the variance, we only need to notice that in the last term of \eqref{eq:VarFx0}, $R(\lambda,W)=o(\lambda^{n-1})$ from the improvement of remainder terms provided in Theorem \ref{thm:specImprov}.
\end{proof}

\section{Uniform equidistribution}\label{sec:uniform}
For a ball $B(x,r)$ with a \textit{fixed} $x\in\M$, the expectation and variance results in Theorems \ref{thm:sserandom} and \ref{thm:sserandomfixed} tell us that for scales $r=r(\lambda)$ such that $r/r_\Planck\to\infty$, the probability that the random wave $u_{\lambda,a}$ \textit{does not} equidistribute on $B(x,r)$ decays in $\lambda$.

In this section, we establish a quantitative estimate of the probability for small scale equidistribution of random waves \textit{uniformly} everywhere on the manifold, i.e. on balls all $B(x,r)$, $x\in\M$. 

First for a fixed $x\in\M$, we derive a quantitative estimate of the probability such that $u_{\lambda,a}$ does not equidistribute on $B(x,r)$. The main tool is from the Levy concentration of measure in Theorem \ref{thm:Levy}. We then use a covering argument to estimate the probability for equidistribution on all balls in $\M$.

Levy concentration of measure in Theorem \ref{thm:Levy} requires a Lipschitz norm estimate. To this end, we need a result from Sogge \cite[Section 4]{So1} that limits the maximum $L^{2}$ growth on a small ball. (It is actually proved for spectral clusters in \cite{So1}, which applies to combination of eigenfunctions in the same spectral window.)
\begin{lemma}\label{lemma:Sogge}
On a compact manifold $\M$, let $u=\sum_{\lambda_{j}\in[\lambda-1,\lambda]}a_{j}e_{j}$. Then for all $x\in\M$ and $\lambda^{-1}\le r\le\Inj\M$, we have that
\begin{equation}\int_{B(x,r)}|u|^2\,d\Vol\le cr\norm{u}^{2}_{L^{2}(\M)},\label{SoggeSS}\end{equation}
where $c>0$ depending only on $\M$.
\end{lemma}

Lemma \ref{lemma:Sogge} is of course an improvement on the trivial estimate $\int_{B(x,r)}|u|^2\,d\Vol\le1$. In fact, it is already sharp on $\s^n$, as the estimate is saturated by the zonal harmonics on balls centred at one of the poles. See Sogge \cite[Section 4]{So1} for more discussion. 

Since we consider window widths $W=W(\lambda)$ such that $1\le W\le\lambda$, we need to get an analogous estimate to \eqref{SoggeSS} for
$$\sum_{\lambda_{j}\in[\lambda-W,\lambda]}a_{j}e_{j}.$$
We are able to use Lemma \ref{lemma:Sogge} to obtain the necessary estimates. 

\begin{lemma}\label{lemma:worstballs}
Let $1\le W\le\lambda$. Suppose that for $a_j\in\R$,
$$u=\sum_{\lambda_{j}\in[\lambda-W,\lambda]}a_{j}e_{j}.$$
Then there exists a positive constant $c$ depending only on $\M$ such that
\begin{equation}\label{worstballs}
\int_{B(x,r)}|u|^{2}\,d\Vol\leq{}
\begin{cases}
cWr\norm{u}_{L^{2}(\M)}^{2}, & \text{if }\lambda^{-1}\leq{}r\leq{}W^{-1},\\
\norm{u}_{L^{2}(\M)}^{2}, & \text{if }W^{-1}\leq{}r\leq\Inj\M.
\end{cases}
\end{equation}
\end{lemma}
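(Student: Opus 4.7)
The plan is to reduce this wide-window estimate to the unit-window bound of Lemma \ref{lemma:Sogge} via a simple spectral decomposition, Cauchy--Schwarz, and $L^2$-orthogonality.

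The second regime $W^{-1}\le r\le\Inj\M$ is immediate: monotonicity of the integral gives $\int_{B(x,r)}|u|^2\,d\Vol\le\int_{\M}|u|^2\,d\Vol=\|u\|_{L^2(\M)}^2$, with no spectral information needed.

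For the first regime $\lambda^{-1}\le r\le W^{-1}$, I would split the spectral window into unit-length pieces. Setting
$$u_k:=\sum_{\lambda_j\in[\lambda-k,\,\lambda-k+1)}a_je_j,\qquad k=1,\ldots,\lceil W\rceil,$$
one has $u=\sum_k u_k$, and by Cauchy--Schwarz applied to the finite sum of length $\lceil W\rceil$,
$$|u(x)|^2\le\lceil W\rceil\sum_k|u_k(x)|^2.$$
Integrating over $B(x,r)$ and applying Lemma \ref{lemma:Sogge} to each unit-window cluster $u_k$ yields
$$\int_{B(x,r)}|u_k|^2\,d\Vol\le cr\,\|u_k\|_{L^2(\M)}^2,$$
and then $L^2$-orthogonality $\sum_k\|u_k\|_{L^2(\M)}^2=\|u\|_{L^2(\M)}^2$ gives $\int_{B(x,r)}|u|^2\,d\Vol\le cWr\,\|u\|_{L^2(\M)}^2$, which is the first case of \eqref{worstballs}.

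The main (and essentially only) subtlety is to verify that Lemma \ref{lemma:Sogge} legitimately applies to each $u_k$: the lemma requires $r\ge\mu_k^{-1}$, where $\mu_k:=\lambda-k+1$ is the top of the $k$-th sub-window, whereas our hypothesis only guarantees $r\ge\lambda^{-1}$, a weaker lower bound since $\mu_k\le\lambda$. This gap matters only for sub-clusters with very low central frequency $\mu_k<r^{-1}$, which can arise solely when $W$ is of the same order as $\lambda$. In that regime $Wr\gtrsim 1$, so for the offending sub-clusters the trivial estimate $\|u_k\|_{L^2(B(x,r))}^2\le\|u_k\|_{L^2(\M)}^2$ is already absorbed into $cWr\,\|u_k\|_{L^2(\M)}^2$ after enlarging the constant, and the argument still closes. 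Splitting the sum over $k$ into the ``high-frequency'' sub-clusters (to which Lemma \ref{lemma:Sogge} applies directly) and the ``low-frequency'' sub-clusters (handled by the trivial bound) and combining the two contributions completes the proof.
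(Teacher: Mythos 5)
Your decomposition into unit spectral windows, pointwise Cauchy--Schwarz to extract the factor $\lceil W\rceil$, application of Lemma~\ref{lemma:Sogge} to each piece, and $L^2$-orthogonality is precisely the paper's strategy; the paper expands $\int_B |u|^2$ as a double sum over sub-clusters and applies Cauchy--Schwarz to the cross-terms rather than pointwise, but the two bookkeepings are equivalent and yield the identical bound $cWr\|u\|_{L^2}^2$.

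You have correctly spotted the genuine subtlety: Lemma~\ref{lemma:Sogge} applied to $u_k$ asks for $r\ge\mu_k^{-1}$, whereas you only have $r\ge\lambda^{-1}$ and $\mu_k\le\lambda$. However, your diagnosis of when this gap can occur, and therefore your proposed repair, is wrong. You assert that $\mu_k<r^{-1}$ ``can arise solely when $W$ is of the same order as $\lambda$,'' and then absorb the trivial bound using $Wr\gtrsim 1$. But the offending case can occur with $W$ small: take, say, $W=2$ and $r^{-1}=\lambda-1/2$ (which satisfies $\lambda^{-1}\le r\le W^{-1}$). Then the lower sub-cluster has frequency $\mu_k\approx\lambda-1<r^{-1}$, so it is offending, yet $W\ll\lambda$ and $Wr\approx 2/\lambda\to 0$. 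In that situation $\|u_k\|_{L^2(\M)}^2$ is \emph{not} bounded by a constant times $Wr\|u_k\|_{L^2(\M)}^2$, and (worse) even if it were, the final estimate you need after multiplying by the Cauchy--Schwarz factor $W$ is $\int_B|u_k|^2\lesssim r\|u_k\|^2$, which the trivial bound does not come close to giving when $r$ is small.

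The correct repair, which is what the paper does, has two parts. First, dispose of $W\in[\lambda/2,\lambda]$ outright: there $Wr\ge W\lambda^{-1}\ge 1/2$, so $\int_{B(x,r)}|u|^2\le\|u\|_{L^2(\M)}^2\le 2Wr\|u\|_{L^2(\M)}^2$ and \eqref{worstballs} holds trivially, no decomposition needed. Second, in the remaining range $W\le\lambda/2$, every sub-cluster frequency satisfies $\mu_k>\lambda/2$, hence $\mu_k^{-1}<2\lambda^{-1}\le 2r$. So even if $r<\mu_k^{-1}$, you have $B(x,r)\subset B(x,\mu_k^{-1})$ with $\mu_k^{-1}\le 2r$, and applying Lemma~\ref{lemma:Sogge} at the admissible radius $\mu_k^{-1}$ gives $\int_{B(x,r)}|u_k|^2\le\int_{B(x,\mu_k^{-1})}|u_k|^2\le c\mu_k^{-1}\|u_k\|^2\le 2cr\|u_k\|^2$. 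The factor $2$ is absorbed into the constant, and the rest of your argument closes as written. The point is that the potential shortfall in $r$ is at most a factor of $2$ once $W\le\lambda/2$, so it is a constant-adjustment, not something to be patched via the trivial bound.
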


\begin{proof}
The inequalities in \eqref{worstballs} are trivially true if $r\ge W^{-1}$ so $Wr>1$. Therefore, we may assume $\lambda^{-1}\leq{}r\leq{}W^{-1}$. Also if $W\in[\lambda/2,\lambda]$, then
$$Wr\ge\frac\lambda2\cdot\lambda^{-1}\ge\frac12.$$
Therefore  \eqref{worstballs} holds with $c=2$.

Now we assume that $W\leq{}\lambda/2$. Write
$$u=\sum_{0\le k\le W-1}u_{k},\quad\text{where}\quad u_{k}=\sum_{\lambda_{j}\in{}[\lambda-W+k,\lambda-W+k+1]}a_{j}e_{j}.$$
Note that each $u_{k}$ is a fixed window spectral cluster at frequency $\mu_{k}=\lambda-W+k+1>\lambda/2$ so we may apply Lemma \ref{lemma:Sogge} to each of the $u_{k}$ separately. Thus,
$$\int_{B(x_{0},r)}|u|^{2}\,d\Vol=\sum_{0\le m,k\le W-1}\int_{B(x_{0},r)}u_{k}(x)u_{(k+m)_{W-1}}(x)\,d\Vol,$$
where
$$(k+m)_{W-1}=k+m\;\;\text{mod }(W-1).$$
Applying Lemma \ref{lemma:Sogge} and the Cauchy-Schwartz inequality, we have that
\begin{align*}
\int_{B(x_{0},r)}|u|^{2}\,d\Vol&\lesssim r\sum_{0\le m,k\le W-1}\norm{u_{k}}_{L^{2}(\M)}\norm{u_{(k+m)_{W-1}}}_{L^{2}(\M)}\\
&\lesssim r\sum_{0\le m\le W-1}\left(\sum_{0\le k\le W-1}\norm{u_{k}}_{L^{2}(\M)}^{2}\right)^{1/2}\left(\sum_{0\le k\le W-1}\norm{u_{(k+m)_{W-1}}}_{L^{2}(\M)}^{2}\right)^{1/2}\\
&\lesssim rW\norm{u}_{L^{2}(\M)}^{2}.
\end{align*}
\end{proof}

\begin{rmk}
It turns out that the above simple estimates are sharp. Spectral clusters $u$ of window width $W$ in Lemma \ref{lemma:worstballs} are spectral cases of approximate eigenfunctions with $L^{2}$ error no greater than $W\lambda$.  That is,
$$\norm{(\Delta-\lambda^{2})u}_{L^{2}(\M)}\lesssim{}W\lambda\norm{u}_{L^{2}(\M)}.$$
Such functions can localize in one ball with radius $r=W^{-1}$. See e.g. Tacy \cite{T}.
\end{rmk}

We now prove the following estimate on the Lipschitz norm of $F_{B(x,r)}$ defined in \eqref{Fdef}.

\begin{prop}\label{prop:Lipatx}
There exists a positive constant $c$ depending only on $\M$ such that
$$\|F_{B(x,r)}\|_\Lip\leq\begin{cases}
crW, & \text{if }\lambda^{-1}\leq{}r\leq{}W^{-1};\\
c, & \text{if }W^{-1}\leq{}r\leq{}\Inj\M.\end{cases}$$ 
\end{prop}

\begin{proof}
Given $u,v\in\HH_W(\lambda)$, let
$$u=\sum_{j=1}^{N_{W}(\lambda)}a_je_j\quad\text{and}\quad v=\sum_{j=1}^{N_{W}(\lambda)}b_je_j,$$
where $a=(a_1,...,a_{N_{W}(\lambda)})$ and $b=(b_1,...,b_{N_{W}(\lambda)})$ are in $\s^{N_{W}(\lambda)-1}$.
We have that
\begin{eqnarray*}
\left|F_{B(x,r)}(a)-F_{B(x,r)}(b)\right|&=&\left|\int_{B(x,r)}|u(y)|^2\,dx-\int_{B(x,r)}|v(y)|^2\,dy\right|\\
&\le&\int_{B(x,r)}\left|u(y)^2-v(y)^2\right|\,dx\\
&=&\int_{B(x,r)}|u(y)-v(y)||u(y)+v(y)|\,dy\\
&\le&\left(\int_{B(x,r)}|u(y)-v(y)|^2\,dy\right)^\frac12\left(\int_{B(x,r)}|u(y)+v(y)|^2\,dy\right)^\frac12\\
&\le&\norm{u-v}_{L^{2}(B(x,r))}\norm{u+v}_{L^{2}(B(x,r))}.
\end{eqnarray*}
If $\lambda^{-1}\le r\leq{}W^{-1}$, then by applying Lemma \ref{lemma:worstballs}, we obtain that
$$\norm{u-v}_{L^{2}(B(x,r))}\le cr^{1/2}W^{1/2}\norm{u-v}_{L^{2}(\M)},$$
and
$$\norm{u+v}_{L^{2}(B(x,r))}\le cr^{1/2}W^{1/2}\norm{u+v}_{L^{2}(\M)}.$$
It then follows that
\begin{equation}\label{eq:Fab}
\left|F_{B(x,r)}(a)-F_{B(x,r)}(b)\right|\le crW\norm{u-v}_{L^{2}(\M)}\norm{u+v}_{L^{2}(\M)}\le crW\norm{u-v}_{L^{2}(\M)},\end{equation}
since $\norm{u+v}_{L^{2}(\M)}\le\norm{u}_{L^2(\M)}+\norm{v}_{L^{2}(\M)}=2$. From
$$u(x)-v(x)=\sum_{j=1}^{N_{W}(\lambda)}(a_j-b_j)e_j(x),$$
we also have that
\begin{equation}\label{eq:distab}
\norm{u-v}_{L^{2}(\M)}=|a-b|\approx\dist(a,b).
\end{equation}
Here, $|a-b|$ is the distance of $a$ and $b$ in $\R^{N_W(\lambda)}$ while $\dist(a,b)$ is the distance of $a$ and $b$ on $\s^{N_{W}(\lambda)-1}$. Putting \eqref{eq:distab} together with \eqref{eq:Fab}, 
$$\|F_{B(x,r)}\|_\Lip=\sup_{a,b\in\s^{N_{W}(\lambda)-1},a\ne b}\frac{\left|F_{B(x,r)}(a)-F_{B(x,r)}(b)\right|}{\dist(a,b)}\le crW.$$
If $W^{-1}\le r\le\Inj\M$,  we use the trivial estimates that
$$\norm{u-v}_{L^{2}(B(x,r))}\le\norm{u-v}_{L^{2}(\M)}\quad\text{and}\quad\norm{u+v}_{L^{2}(B(x,r))}\le\norm{u+v}_{L^{2}(\M)}.$$
Thus,
$$\left|F_{B(x,r)}(a)-F_{B(x,r)}(b)\right|\le\norm{u-v}_{L^{2}(\M)}\norm{u+v}_{L^{2}(\M)}\le c\norm{u-v}_{L^{2}(\M)}.$$
In the view of \eqref{eq:distab} again, the Lipschitz norm of $F_{B(x,r)}$ when $W^{-1}\le r\le\Inj\M$ follows.
\end{proof}

We can now use Levy concentration of measure to control the probability that for a fixed $x$, $F_{B(x,r)}$ deviates from $\Vol(B)/\Vol(\M)$.
\begin{prop}\label{prop:sseatx}
Let $m(\lambda)$ be an $o(1)$ order function (i.e. $m:\R^{+}\to\R^{+}$ and $m(\lambda)\to0$ as $\lambda\to\infty$). For $x\in\M$, denote
\begin{equation}\label{eq:Srlx}
S_{r,x}(m)=\left\{a\in\s^{N_{W}(\lambda)-1}:\left|\int_{B(x,r)}|u_{\lambda,a}|^2\,d\Vol-\frac{\Vol(B(x,r))}{\Vol(\M)}\right|\geq r^{n}m(\lambda)\right\}.
\end{equation}
For some $c$ and $K$ depending only on $\M$, the following statements are true.
\begin{enumerate}[(i).]
\item Suppose $W=W(\lambda)$ such that $1\le W\le\lambda$ and $W\to\infty$ as $\lambda\to\infty$.  Set
$$r_1=W^{-\frac{1}{2n}}\lambda^{-\frac{n-1}{2n}}.$$
Then for all $m(\lambda)\geq KW^{-1}$ and $r=r(\lambda)$ such that 
$$\max\left\{W^{-1},r_{1}m(\lambda)^{-\frac{1}{n}}\right\}\leq r\leq{}\Inj(\M),$$ 
we have that 
$$\mu_{N_W(\lambda)}\left(S_{r,x}(m)\right)\le\exp\left(-\frac{cr^{2n}m(\lambda)^{2}}{r_1^{2n}}\right).$$
\item Suppose $W=W(\lambda)$ such that $1\le W\le\lambda$ and $W\to\infty$ as $\lambda\to\infty$. Set
$$r_2=W^{\frac{1}{2(n-1)}}\lambda^{-\frac12}.$$
Then for all $m(\lambda)\geq{}KW^{-1}$ and $r=r(\lambda)$ such that 
$$r_{2}m(\lambda)^{-\frac{1}{n-1}}\le r\le W^{-1},$$ 
we have that
$$\mu_{N_W(\lambda)}\left(S_{r,x}(m)\right)\le\exp\left(-\frac{cr^{2(n-1)}m(\lambda)^{2}}{r_2^{2(n-1)}}\right).$$
\item Assume that the set of loop directions $\Loop_x$ is of measure zero in $S_x^*\M$ for all $x\in\M$. Suppose that $W>0$ is independent of $\lambda$. Then there exists some $m(\lambda)$ that is $o(1)$ as $\lambda\to\infty$ such that for any $r$ satisfying 
$$\lambda^{-1/2}m(\lambda)^{-\frac{1}{n-1}}\leq{}r\leq{}\Inj(\M),$$
we have 
$$\mu_{N_W(\lambda)}\left(S_{r,x}(m)\right)\le\exp\left(-C\lambda^{n-1}r^{2(n-1)}m(\lambda)^{2}\right).$$
\end{enumerate}
\end{prop}

\begin{proof}
First we recall from the expectation estimate \eqref{eq:sserandomEx} in Theorem \ref{thm:sserandom} that
$$\E\left(F_{B(x,r)}\right)=\E\left(\int_{B(x,r)}|u_\lambda|^2\,d\Vol\right)=\frac{\Vol(B(x,r))}{\Vol(\M)}\left[1+O\left(W^{-1}\right)\right],$$
in which the term $O\left(W^{-1}\right)$ is independent of $x\in\M$.

By Levy concentration of measures in Theorem \ref{thm:Levy}, we estimate the difference between the expectation and the median. 
\begin{eqnarray*}
|\E(F_{B(x,r)})-\Me(F_{B(x,r)})|&=&\left|\int_{\s^{N_W(\lambda)-1}}F_{B(x,r)}(a)\,da-\Me(F_{B(x,r)})\right|\nonumber\\
&\le&\int_{\s^{N_W(\lambda)-1}}\left|F_{B(x,r)}(a)-\Me(F_{B(x,r)})\right|\,da\\
&=&\int_0^\infty\mu_{N_W(\lambda)}\left(|F_{B(x,r)}(a)-\Me(F_{B(x,r)})|>t\right)\,dt\\
&\le&\int_0^\infty\exp\left(-\frac{(N_W(\lambda)-2)t^2}{\|F_{B(x,r)}\|_\Lip^2}\right)\,dt\\
&\le&\frac{c\|F_{B(x,r)}\|_\Lip}{N_W(\lambda)^\frac12}
\end{eqnarray*}
for some absolute constant $c>0$. Putting this together with the expectation \eqref{eq:sserandomEx}, we then have that
$$\Me(F_{B(x,r)})=\frac{\Vol(B(x,r))}{\Vol(\M)}+R_1+R_2,$$
where
$$R_1=O\left(W^{-1}\right)r^n\quad\text{and}\quad R_2=O\left(\frac{\|F_{B(x,r)}\|_\Lip}{N_W(\lambda)^\frac12}\right).$$
Now we divide into the three cases listed in the proposition. 

Case (i). Since $r\ge W^{-1}$, the second inequality of Lipschitz norm estimate in Proposition \ref{prop:Lipatx} applies. That is,
$$\|F_{B(x,r)}\|_\Lip\le c.$$
Hence as $r\ge r_1m(\lambda)^{-\frac{1}{n}}$,
\begin{eqnarray*}
R_2=O\left(\frac{\|F_{B(x,r)}\|_\Lip}{N_W(\lambda)^\frac12}\right)=O\left(N_W(\lambda)^{-\frac12}\right)=O\left(W^{-\frac12}\lambda^{-\frac{n-1}{2}}\right)=O(r_1^n)=O(r^{n}m(\lambda)).
\end{eqnarray*}
 Therefore,
\begin{equation}\label{eq:medianatx}
\Me(F_{B(x,r)})=\frac{\Vol(B(x,r))}{\Vol(\M)}+O\left(r^{n}(W^{-1}+m(\lambda)
)\right).
\end{equation}
Now we use the Levy concentration of measure to control deviance from the median. Define
\begin{equation}\label{eq:Srxtilde}
\widetilde S_{r,x}(m)=\left\{a\in\s^{N_{W}(\lambda)-1}:\left|\int_{B(x,r)}|u_{\lambda,a}|^{2}d\Vol-\Me(F_{B(x,r)})\right|\geq{}r^{n}m(\lambda)\right\}.
\end{equation}
By the Levy concentration of measure in Theorem \ref{thm:Levy},
\begin{align*}
\mu_{N_{W}(\lambda)}\left(\widetilde{S}_{r,x}\left(\frac m2\right)\right)&\leq \exp\left(-\frac{(N_{W}(\lambda)-2)r^{2n}m(\lambda)^{2}}{8\|F_{B(x,r)}\|^2_\Lip}\right)\\
&\leq \exp\left(-cW\lambda^{n-1}r^{2n}m(\lambda)^{2}\right)\\
&=\exp\left(-\frac{cr^{2n}m(\lambda)^{2}}{r^{2n}_1}\right),
\end{align*}
as $r_{1}=W^{-\frac{1}{2n}}\lambda^{-\frac{n-1}{2n}}$. 

Now suppose that $a\in S_{r,x}(m)$. Then
$$\left|\int |u_{\lambda,a}|^{2}d\Vol-\frac{\Vol(B(x,r))}{\Vol(\M)}\right|\geq{}r^{n}m(\lambda).$$
So combining with \eqref{eq:medianatx},
\begin{align*}
\left|\int_{B(x,r)}|u_{\lambda,a}|^{2}\,d\Vol-\Me(F_{B(x,r)})\right|&\ge\left|\int_{B(x,r)}|u_{\lambda,a}|^{2}\,d\Vol-\frac{\Vol(B(x,r))}{\Vol(\M)}\right|-\left|\frac{\Vol(B(x,r))}{\Vol(\M)}-\Me(F_{B(x,r)})\right|\\
&\ge r^{n}m(\lambda)+O\left(r^{n}(W^{-1}+m(\lambda))\right).
\end{align*}
If $m\ge KW^{-1}$ for some sufficiently large $K$, then
$$\left|\int_{B(x,r)}|u_{\lambda,a}|^{2}\,d\Vol-\Me(F_{B(x,r)})\right|\geq{}\frac{3r^{n}m(\lambda)}{4}+O\left(r^{n}m(\lambda)\right).$$
Since $m(\lambda)\to0+$ as $\lambda\to\infty$, we can conclude that 
$$\left|\int_{B(x,r)}|u_{\lambda,a}|^{2}\,d\Vol-\Me(F_{B(x,r)})\right|\geq{}\frac{r^{n}m(\lambda)}{2},$$
for sufficiently large $\lambda$. That is, $a\in\widetilde S_{r,x}(m/2)$. Therefore,
$$\mu_{N_{W}(\lambda)}(S_{r,x}(m))\leq\mu_{\s^{N_{W}(\lambda)-1}}\left(\widetilde{S}_{r,x}\left(\frac m2\right)\right)\leq\exp\left(-\frac{cr^{2n}m(\lambda)^{2}}{r_{1}^{2n}}\right).$$

Case (ii). The reasoning follows as Case (i) however the Lipschitz norm is different. Since $r\le W^{-1}$, the first inequality of Lipschitz norm estimate in Proposition \ref{prop:Lipatx} applies. That is,
$$\|F_{B(x,r)}\|_\Lip\le crW.$$
Hence as $r\ge r_2m(\lambda)^{-\frac{1}{n-1}}$,
\begin{eqnarray*}
R_2=O\left(\frac{\|F_{B(x,r)}\|_\Lip}{N_W(\lambda)^\frac12}\right)=O\left(\frac{crW}{N_W(\lambda)^\frac12}\right)=O\left(rW^{\frac12}\lambda^{-\frac{n-1}{2}}\right)=O(rr_2^{n-1})=O(r^{n}m(\lambda)).
\end{eqnarray*}
Note that the above equation is also independent of $x\in\M$. So again,
$$\Me(F_{B(x,r)})=\frac{\Vol(B(x,r))}{\Vol(\M)}+O(r^{n}(W^{-1}+m(\lambda))).$$
Then with $\widetilde{S}_{r,w}(m)$ defined as in \eqref{eq:Srxtilde}, we have that
\begin{align*}
\mu_{N_{W}(\lambda)}\left(\widetilde{S}_{r,x}\left(\frac m2\right)\right)&\leq{}\exp\left(-\frac{(N_{W}(\lambda)-2)r^{2n}m(\lambda)^{2}}{8\|F_{B(x,r)}\|_{\Lip}^{2}}\right)\\
&\leq\exp\left(-cW^{-1}\lambda^{n-1}r^{2(n-1)}m(\lambda)^{2}\right)\\
&=\exp\left(-\frac{cr^{2(n-1)}m(\lambda)^{2}}{r_{2}^{2(n-1)}}\right),
\end{align*}
as $r_{2}=W^{\frac{1}{2(n-1)}}\lambda^{-\frac{1}{2}}$. As in Case (i), $a\in S_{r,x}(m)$ implies that $a\in \widetilde{S}_{r,x}(m/2)$ so we also have
$$\mu_{N_{W}(\lambda)}(S_{r,x}(m))\leq\exp\left(-\frac{cr^{2(n-1)}m(\lambda)^{2}}{r_{2}^{2(n-1)}}\right).$$

Case (iii). Now we address the case where the window width is allowed to be fixed but we assume that $\M$ satisfies the loop set conditions. By Theorem \ref{thm:sserandomfixed}, there exists some $o(1)$ order function $m(\lambda)$ so that
$$\E\left(F_{B(x,r)}\right)=\frac{\Vol(B(x,r))}{\Vol(\M)}+r^{n}m(\lambda).$$
In this case as $W$ is fixed the first Lipschitz norm estimate in Proposition \ref{prop:Lipatx} applies. So as $r\ge\lambda^{-1/2}m(\lambda)^{-\frac{1}{n-1}}$,
$$\left|\E\left(F_{B(x,r)}\right)-\Me\left(F_{B(x,r)}\right)\right|\leq{}r\lambda^{-\frac{n-1}{2}}\leq{}r^{n}m(\lambda).$$
Therefore,
$$\Me\left(F_{B(x,r)}\right)=\frac{\Vol(B(x,r))}{\Vol(M)}+r^{n}m(\lambda)+O(r^{n}m(\lambda)).$$
As in Case (ii), we have that for this specific $m(\lambda)$,
$$\mu_{N_{W}(\lambda)}\left(\widetilde{S}_{r,x}\left(\frac m2\right)\right)\leq{}\exp\left(-c\lambda^{n-1}r^{2(n-1)}m(\lambda)^{2}\right).$$
Also by the reasoning of Case (i), if $a\in S_{r,x}(m)$ then $a\in \widetilde{S}_{r,x}(m/2)$ so
$$\mu_{N_{W}(\lambda)}(S_{r,x}(m))\leq{}\exp\left(-c\lambda^{n-1}r^{2(n-1)}m(\lambda)^{2}\right).$$
\end{proof}

To prove uniform equidistribution in Theorem \ref{thm:SSEonM}, we use a covering lemma that is similar to the one in Han \cite[Section 3.2]{Ha2}. 

\begin{lemma}\label{lemma:covering}
For any $d>0$ there exists a family of geodesic balls that covers $\M$:
$$\bigcup_{p=1}^{N_d}B(x_p,d)\supset\M\quad\text{with }N_d\le cd^{-n},$$ 
where $c>0$ depends only on $\M$.
\end{lemma}

Given a cover $\{B(x_p,d)\}$, note that for any $r\gg d$, $\{B(x_{p},r)\}$ remains a cover. Moreover, in the cover $\{B(x_{p},r)\}$, the centers $x_p$ of the balls are separated by distances $d\ll r$. This enables us to efficiently approximate the $L^{2}$ mass of $u$ on $B(x,r)$ by the $L^{2}$ mass on one of the $B(x_{p},r)$.  Then the set $S_{r}(m)$ of all $a\in\s^{N_{W}(\lambda)-1}$ for which equidistribution fails at some point is contained in the union of the $S_{r,x_{p}}(m)$, for which we have estimates from Proposition \ref{prop:sseatx}. See below for the details of the proof. 

\begin{proof}[Proof of Theorem \ref{thm:SSEonM}]
We begin by choosing a cover as in Lemma \ref{lemma:covering} with 
$$d=r\lambda^{-(n-1)}m(\lambda)^{2}.$$ 
Then for any $x\in\M$ there exists an $x_{p}$ so that $x\in B(x_p,d)$. We now approximate $\Vol(B(x,r))$ by $\Vol(B(x_p,r))$ and the $L^2$ mass of $u_{\lambda,a}$ in $B(x,r)$ by the one in $B(x_p,r)$, respectively. First,
\begin{align}
\left|\frac{\Vol(B(x,r))}{\Vol(\M)}-\frac{\Vol(B(x_{p},r)}{\Vol(\M)}\right|&\le c\Vol\left(B(x_p,r+d)\setminus B(x_p,r-d)\right)\nonumber\\
&\le cdr^{n-1}\nonumber\\
&\le cr^{n}\lambda^{-(n-1)}m(\lambda)^{2}.\label{approxvol}
\end{align}
Second, since $\norm{u_{\lambda,a}}_{L^{\infty}}\leq{}c\lambda^{\frac{n-1}{2}}$ (see e.g. \cite[Section 4.2]{So3}), we have that
\begin{align}
\left|\int_{B(x,r)}|u_{\lambda,a}|^{2}\,d\Vol-\int_{B(x_{p},r)}|u_{\lambda,a}|^{2}\,d\Vol\right|&\le c\norm{u_{\lambda,a}}_{L^{\infty}}^2\Vol\left(B(x_p,r+d)\setminus B(x_p,r-d)\right)\nonumber\\
&\le cr^{n}m(\lambda)^{2}.\label{approxL2}
\end{align}
We recall that 
$$S_{r}(m)=\left\{a\in\s^{N_{W}(\lambda)-1}:\exists x\in\M\text{ such that }\left|\int_{B(x,r)}|u_{\lambda,a}|^{2}\,d\Vol-\frac{\Vol(B(x,r)}{\Vol(\M)}\right|\ge r^{n}m(\lambda)\right\}.$$

Now suppose that $a\in S_r(m)$. Then by \eqref{approxvol} and \eqref{approxL2}, there exists $x_p$ such that
\begin{align*}
\left|\int_{B(x_{p},r)}|u_{\lambda,a}|^{2}\,d\Vol-\frac{\Vol(B(x_{p},r)}{\Vol(\M)}\right|&\ge\left|\int_{B(x,r)}|u_{\lambda,a}|^{2}\,d\Vol-\frac{\Vol(B(x,r)}{\Vol(\M)}\right|\\
&\qquad-\left|\frac{\Vol(B(x,r))}{\Vol(\M)}-\frac{\Vol(B(x_{p},r)}{\Vol(\M)}\right|\\
&\qquad-\left|\int_{B(x,r)}|u_{\lambda,a}|^{2}\,d\Vol-\int_{B(x_{p},r)}|u_{\lambda,a}|^{2}d\Vol\right|\\
&\ge r^{n}m(\lambda)-\left(cr^{n}\lambda^{-(n-1)}m(\lambda)^{2}+cr^{n}m(\lambda)^{2}\right)\\
&\ge r^{n}m(\lambda)+O(r^{n}m(\lambda)^{2})\\
&\ge r^{n}m(\lambda)/2,
\end{align*}
for sufficiently large $\lambda$ (since $m(\lambda)\to0+$ as $\lambda\to\infty$). It is then immediate that
\begin{equation}S_{r}(m)\in \bigcup_{p=1}^{N_d}S_{r,x_{p}}(m/2).\label{Scover}\end{equation}
Let us consider each case in the theorem separately.

Case (i). All the conditions in Proposition \ref{prop:sseatx} are satisfied. In fact, we have a stronger condition that $r\geq{}\alpha r_{1}m(\lambda)^{-\frac{1}{n}}\log(\lambda)^{\frac{1}{2n}}$. Therefore,
\begin{align*}
\mu_{N_{W}(\lambda)}\left(S_{r,x_{p}}\left(\frac m2\right)\right)&\leq{}\exp\left(-\frac{cr^{2n}m(\lambda)^{2}}{8 r_{1}^{2n}}\right)\\
&=\exp\left(-\frac{cr^{2n}m(\lambda)^{2}}{16 r_{1}^{2n}}\right)\exp\left(-\frac{cr^{2n}m(\lambda)^{2}}{16 r_{1}^{2n}}\right)\\
&\leq\exp\left(-\frac{c\alpha^{2n}\log\lambda}{16}\right)\exp\left(-\frac{cr^{2n}m(\lambda)^{2}}{16 r_{1}^{2n}}\right)\\
&\leq\lambda^{-\frac{c\alpha^{2n}}{16}}\exp\left(-\frac{cr^{2n}m(\lambda)^{2}}{16r_{1}^{2n}}\right).
\end{align*}
Now by \eqref{Scover} we have that
$$\mu_{N_{W}(\lambda)}\left(S_{r}(m)\right)\leq{}N_d\lambda^{-\frac{c\alpha^{2n}}{16}}\exp\left(-\frac{cr^{2n}m(\lambda)^{2}}{8r_{1}^{2n}}\right).$$
Further, notice that from our initial choice of $d$,
$$N_d=cd^{-n}=cr^{-n}\lambda^{n(n-1)}m(\lambda)^{-2n}\le c\lambda^{3n+n(n-1)}.$$
Here, we used the fact that $r,m(\lambda)\gtrsim W^{-1}\geq{}\lambda^{-1}$.  Since $N_{d}$ grows as a power of $\lambda$ we can  choose $\alpha$ large enough (depending only on the dimension of $\M$) so
$$N_d\lambda^{-\frac{c\alpha^{2n}}{16}}\le1.$$
We arrive at the desired estimate that
$$\mu_{N_{W}(\lambda)}(S_{r}(m))\leq\exp\left(-\frac{cr^{2n}m(\lambda)^{2}}{16 r_{1}^{2n}}\right).$$

Case (ii) follows the same reasoning with suitable adjustments (comparing $r$ to $r_{2}$) so we omit the proof. 

Case (iii).  Proposition \ref{prop:sseatx} tells us that there exists an $o(1)$ order function $m(\lambda)$ such that for any $x$,
$$\mu_{N_W(\lambda)}\left(S_{r,x}(m)\right)\le\exp\left(-c\lambda^{n-1}r^{2(n-1)}m(\lambda)^{2}\right).$$
Provided $m(\lambda)=O(\lambda^{-\beta})$ for some $\beta>0$, the argument remains the same as in Case  (i).\footnote{This is indeed true for all the cases such that the improvement in the pointwise Weyl law in Theorem \ref{thm:specImprov} is known. See e.g. \cite{CH}.} If the $m(\lambda)$ extracted from Proposition \ref{prop:sseatx} decays faster, we simply pick some $\widetilde{m}(\lambda)\leq{}m(\lambda)$ so that $\widetilde{m}(\lambda)=O(\lambda^{-\beta})$ for some $\beta>0$. Note that the results of Proposition \ref{prop:sseatx} will also hold for $\widetilde{m}(\lambda)$ and we complete the proof. 
\end{proof}

\end{document}